\providecommand{\href}[2]{#2}
\theoremstyle{plain}
\newtheorem{Thm}{Theorem}
\newtheorem{Ex}[Thm]{Example}
\newtheorem{Coro}[Thm]{Corollary}
\newtheorem{Lem}[Thm]{Lemma}
\newtheorem{Prop}[Thm]{Proposition}
\newtheorem{Def}[Thm]{Definition}
\newtheorem{Rem}[Thm]{Remark}
\begin{document}

\title{Compression Maps Between Polytopes}

\author{Jos\'{e} Ayala}

\address{Universidad de Tarapacá, Casilla 7D, Iquique, Chile} 
              \email{jayalhoff@gmail.com}
\author{David Kirszenblat}
\address{School of Mathematics and Statistics, University of Melbourne
              Parkville, VIC 3010 Australia}
\email{d.kirszenblat@gmail.com}
\author{J. Hyam Rubinstein}
\address{School of Mathematics and Statistics, University of Melbourne
              Parkville, VIC 3010 Australia}
\email{joachim@unimelb.edu.au}
\subjclass[2020]{Primary 52B11, 52A20; Secondary 52B70, 52C07, 51F99}
\maketitle

\begin{abstract} 
A shape of a combinatorial polytope is a convex embedding into Euclidean space. We provide necessary and sufficient conditions for a piecewise linear map between two shapes of the same polytope to be a compression (respectively a weak compression), meaning a distance decreasing (respectively a distance non-increasing) map between distinct pairs of points. We establish that there is a partial order on the space of shapes given by the relation of having a weak compression map between pairs of shapes.

Finally, we construct a compression metric on the projective shape space of a polytope; the space of convex Euclidean realizations modulo rigid motions and homothety. For the projective space shape of a simplex, we show that the compression metric is complete. For general polytopes, we establish that the projective shape space has a natural completion given by the projective shape space of weakly convex realizations. 
\end{abstract}

\section{Introduction}

 A compression (respectively weak compression) is a map between two realizations of a combinatorial polytope as a convex set, where the map is linear on each simplex of a triangulation of the polytope (or piecewise linear on the polytope) and distance decreasing (respectively non-increasing) between arbitrary distinct pairs of points. 
 
 For convenience, if the polytope is a simplex, then the triangulation we use is given by the simplex structure. For general polytopes, the triangulation we mainly consider is the barycentric one. This has the advantage of being independent of the choice of convex realization for the polytope. It would be possible to use other triangulations of polytopes to construct compression maps and we discuss this briefly. 

We denote by $\mathcal{P}$ the space of all realizations of a given combinatorial type of a polytope as a convex set in $\mathbb{R}^n$. Then, we observe that weak compression induces a partial order relation on $\mathcal{P}$.

A natural question is whether a piecewise linear map between two shapes of a polytope, which induces a compression on the set of edges, is automatically a compression map. We provide a simple counterexample, showing that this does not hold even for linear maps between triangles. 

In the special case of a polytope with simplicial facets that can be triangulated by successive truncations at vertices, a compression can be constructed as an orthogonal projection from a pleated convex embedding in a higher-dimensional space to a polytope of the same combinatorial type, which has a convex embedding in a lower-dimensional space. We show that all compressions and weak compressions for this special class $\mathcal{C}$ of polytopes can be constructed in this way. For simplices, it suffices to take convex embeddings into a higher-dimensional space to realize all compression and weak compression maps through orthogonal projections. 

For general polytopes, we give a necessary and sufficient condition for a piecewise linear map between two shapes to be a compression or a weak compression, using the barycentric triangulation. 

We prove that an embedding of a simplex or a polytope in $\mathcal{C}$ as a convex set or pleated convex set respectively into a higher-dimensional space, can be achieved through a succession of embeddings, with the dimension increasing by one at each step. This result may be useful in developing efficient algorithms for constructing compression maps.

Finally, we introduce a compression metric on the projective shape space $\mathcal{P} / \sim$, which parametrizes polytope shapes up to homothety. This metric is obtained from the eigenvalues of the symmetric matrix associated with the affine transformation mapping one shape of a simplex to another. The compression distance between shapes of simplices is defined as the logarithm of the ratio of the maximum and minimum eigenvalues of the symmetric matrix, capturing the anisotropy of the transformation. This formulation ensures scale invariance and provides a natural metric structure on the projective shape space of a simplex. For a polytope, the distance between two shapes is defined as the maximum distance between the corresponding simplices in the barycentric subdivisions of the shapes. 

We show that the compression metric for simplices is complete and for polytopes, the completion is obtained by adding in weakly convex shapes of the polytope. In particular, the dihedral angle between adjacent facets can achieve $\pi$ for such weakly convex shapes. 

A useful idea is that among maps between different shapes of a polytope, by rescaling the target shape via a homothety, we can find a map where at least one pair of distinct points $x, y$ has distance unchanged under the map and all other pairs of distinct points have distance unchanged or decreased, i.e a weak compression.  Simple examples illustrate that for weak compressions, pairs of distinct points whose distance apart is unchanged by the map, do not necessarily lie on a single edge of the polytope. However, for simplices, we establish that such pairs can always be chosen so that at least one of $x, y$ is a vertex. Similarly pairs of points where the distance is decreased by a maximum amount can be chosen so that at least one is a vertex. 

Given any two shapes of a polytope, by sufficiently rescaling one of the shapes, we obtain a compression map between the shapes. So, the scale of the shapes is crucial to the existence of a compression map. 

There is a vast literature on polytope realization as a convex set; we mention \cite{alex, grum, schrij, ziegler1}

We thank Simon Brendle for asking us for conditions that guarantee the existence of compression maps between polytopes. 

\section{Preliminaries}
 
A {\it shape} $P$ of a combinatorial polytope $\Pi$ is the convex hull of a finite set of points $V$ in $\mathbb{R}^d$, so that the elements of $V$ are the extreme points or vertices of $P$. Moreover, the face structure of $P$ is the same as that for $\Pi$. By this we mean that there is a bijection between the vertices of $\Pi$ and of $P$ which induces a bijection between the faces of $\Pi$ and the faces of $P$. Here, each face of $P$ is associated with its vertex set. 

In this paper, we restrict attention to simple polytopes. So each vertex belongs to exactly $d$ facets and in a shape $P$, each vertex belongs to $d$ hyperplanes defining facets of $P$. 

A shape $P$ of a polytope $\Pi$ is $d$-dimensional if among the vertex set, there are subsets of size $d+1$ but no larger, which are affinely independent. We will always require a shape of a $d$-dimensional combinatorial polytope to be non-degenerate, i.e., to have the same dimension $d$ as the polytope.

We identify two shapes of $\Pi$ if they differ by an isometry of $\mathbb{R}^d$. $\mathcal P$ is then the space of the shapes of a fixed combinatorial polytope $\Pi$ in $\mathbb{R}^d$ up to identification by an isometry.  Note that a shape of a polytope can also be characterised as a finite intersection of closed affine half-spaces. 

To enlarge the space of shapes of a polytope, we will work in the configuration space $\mathcal N$ of $N$ points in $\mathbb{R}^{d}$. $\mathcal N$ is the collections of $N$ points, which are not necessarily distinct, under the equivalence relation of ambient isometry. The space of shapes $\mathcal P$ of a polytope with $N$ vertices naturally embeds in $\mathcal N$. The larger space of weakly convex shapes of $\Pi$ is used to complete the projective space of shapes equipped with the compression metric. 
(A weakly convex shape is one where adjacent facets can have parallel normal vectors).

A $d$-simplex is the convex hull of $d+1$ affinely independent points in $\mathbb{R}^d$. An important way of triangulating polytopes without introducing extra vertices is  {\it regular} triangulations, which were introduced by Gelfand, Kapranov and Zelevinsky \cite{gelfand}. A triangulation of a realization $P$ of a $d$-dimensional polytope $\Pi$ is a decomposition into convex $d$-simplices, so that the union of all the simplices is $P$ and the intersection of any pair of simplices is empty or a face of each of the simplices. In \cite{gelfand} triangulations are constructed by lifting the vertices of $P$ to appropriate heights in $\mathbb{R}^{d+1}$. Then for a generic choice of heights, the lifted vertices have a convex hull whose lower boundary (in terms of the vertical direction in $\mathbb{R}^{d+1}$) projects to a triangulation of $P$, which is called regular. 

A second type of triangulation of a polytope is called {\it derived} or {\it barycentric}. Given a shape $P$ of a polytope of dimension $d$ and a face $F$ of $P$, we can define the barycentre $b(F)$ of $F$ as the point in the interior of $F$ which is the average of the vertices $v_1, \dots v_n$ of $F$. So, $b(F)= {1 \over n} \sum_{ 1 \le i \le n}  v_i$. Then the vertices of the barycentric triangulation $B(P)$ are all the barycentres of all the faces of $P$. The $d$-simplices of $B(P)$ are then the barycentres $b(F_0), b(F_1), \dots b(F_d)$ of a maximal chain of faces $F_0 \subset F_1 \dots \subset F_d$.

\section{Definitions and Fundamental Properties}

 Let $P, Q \in \mathcal P$ be two shapes of a given combinatorial polytope $\Pi$. Consider the barycentric triangulations $B(P), B(Q)$ of $P, Q$. There is a uniquely defined piecewise linear map $f:P \to Q$ which is a linear isomorphism restricted to each pair of simplices of $B(P), B(Q)$, induced by the correspondence between the vertices of $P,Q$. Note the fact that $P,Q$ are realizations of the same combinatorial polytope $\Pi$ implies there is a unique matching of their vertices with those of $\Pi$ and hence between their vertices. 
 
 We now describe some properties of $f$. Let $b(F)= {1 \over n} \sum_{ 1 \le i \le n}  v_i$  be a vertex of $B(P)$, where $F$ is a face with vertices $v_1, \dots v_n$. Then there is a corresponding vertex $f(b(F))$ of $B(Q)$. In fact $f(b(F)=b(G)$ where $G$ is the face of $Q$ with vertices $f(v_1), \dots f(v_n)$. The reason is that $f(b(F))={1 \over n} \sum_{ 1 \le i \le n}  f(v_i) = b(G)$.

 We will use $||u-v||$ to denote the distance between points $u,v \in \mathbb{R}^d$.
 
 \begin{Def} We say that $P$ compresses to $Q$ if $||f(x)-f(y)|| < ||x-y||$ for $x,y \in P$. We say that there is a weak compression from $P$ to $Q$ if $||f(x)-f(y)|| \leq ||x-y||$ for $x,y \in P$.
\end{Def}

\begin{Prop}\label{comptriang}
$P$ compresses (respectively weakly compresses) to $Q$ if and only if the map $f:P \to Q$ is a compression (respectively a weak compression) between corresponding simplices of the triangulations $B(P),B(Q)$ of $P,Q$ respectively. 

\end{Prop}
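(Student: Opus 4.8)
The plan is to prove the non-trivial (``if'') direction; the ``only if'' part is immediate, since a map that decreases (respectively does not increase) all distances on $P$ in particular does so on each simplex $\sigma$ of $B(P)$, regarded as a map onto the corresponding simplex $f(\sigma)$ of $B(Q)$. So assume $f$ is a compression (respectively a weak compression) between every pair of corresponding simplices of $B(P)$ and $B(Q)$, and let $x,y\in P$ be distinct. Since $P$ is convex, the straight segment $[x,y]$ lies in $P$; parametrize it by $\gamma(t)=(1-t)x+ty$, $t\in[0,1]$.

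Next I would subdivide this segment according to the triangulation $B(P)$. For each $d$-simplex $\sigma$ of $B(P)$, the preimage $\gamma^{-1}(\sigma)$ is a closed subinterval of $[0,1]$ (the intersection of a line with a convex set), and these subintervals cover $[0,1]$ because $P=\bigcup\sigma$. Collecting the finitely many endpoints of all these subintervals gives a partition $0=t_0<t_1<\dots<t_m=1$ such that each $\gamma([t_j,t_{j+1}])$ lies in a single simplex $\sigma_j$ of $B(P)$: the midpoint of $(t_j,t_{j+1})$ lies in some $\sigma$, hence in the interior of $\gamma^{-1}(\sigma)$ (its endpoints being among the $t_i$), which forces $[t_j,t_{j+1}]\subseteq\gamma^{-1}(\sigma)$. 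Writing $p_j=\gamma(t_j)$, we have $p_0=x$, $p_m=y$, the $p_j$ are collinear and monotonically ordered along $[x,y]$, and each consecutive pair $p_j,p_{j+1}$ lies in the common simplex $\sigma_j$.

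Then I would combine the triangle inequality with the simplexwise hypothesis. Telescoping and applying the triangle inequality in $\mathbb{R}^d$,
\[
\|f(x)-f(y)\|=\Bigl\|\sum_{j=0}^{m-1}\bigl(f(p_{j+1})-f(p_j)\bigr)\Bigr\|\le\sum_{j=0}^{m-1}\|f(p_{j+1})-f(p_j)\|.
\]
Since $p_j,p_{j+1}\in\sigma_j$, the (weak) compression hypothesis on the pair $\sigma_j,f(\sigma_j)$ gives $\|f(p_{j+1})-f(p_j)\|\le\|p_{j+1}-p_j\|$, and because the $p_j$ are ordered along the segment, $\sum_j\|p_{j+1}-p_j\|=\|x-y\|$. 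Hence $\|f(x)-f(y)\|\le\|x-y\|$, which is the weak compression statement. For the strict statement, $x\ne y$ forces $\sum_j\|p_{j+1}-p_j\|=\|x-y\|>0$, so some sub-segment is non-degenerate, say $p_k\ne p_{k+1}$ with both in $\sigma_k$; the strict compression hypothesis on $\sigma_k$ then gives $\|f(p_{k+1})-f(p_k)\|<\|p_{k+1}-p_k\|$, while the remaining terms satisfy the weak inequality, so summing yields $\|f(x)-f(y)\|<\|x-y\|$.

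The only point requiring genuine care is the subdivision step: one must verify that a segment breaks into finitely many pieces each contained in a single simplex, with the break points collinear and monotone, and that $f$ is unambiguously defined at break points lying on shared faces (which holds because $f$ is a genuine map $P\to Q$, its linear pieces agreeing on overlaps by construction). Everything else is the standard observation that Euclidean distances add along a straight segment while the triangle inequality can only decrease the length of the image path.
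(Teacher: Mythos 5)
Your proposal is correct and follows essentially the same route as the paper's proof: decompose the straight segment $[x,y]$ into finitely many subsegments each lying in a single simplex of $B(P)$, apply the simplexwise hypothesis on each piece, and conclude via the triangle inequality (the paper phrases this as comparing the length of the image path $f(\gamma)$ with $\|f(x)-f(y)\|$, which is the same telescoping estimate). Your treatment of the subdivision step is somewhat more careful than the paper's, but the underlying argument is identical.
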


\begin{proof}
Clearly if $f$ is a compression (respectively weak compression), then $f$ restricted to a simplex is also a compression (respectively weak compression). So it suffices to prove the converse. Consider a pair of points $x,y \in P$. Let $\gamma$ be the straight line segment from $x$ to $y$. Then $\gamma$ lies in the convex set $P$ and passes through simplices $\sigma_1, \sigma_2, \dots \sigma_k$. Let $\gamma_1, \gamma_2, \dots \gamma_k$ be the segments of $\gamma$ in these simplices. Clearly each $\gamma_i$ is either a closed interval or a point. For $\gamma$ is a shortest path in $P$ and the shortest path between any two points in a simplex of $P$ is a straight line segment inside the simplex. 

Next, consider $f(\gamma)$. Since for each segment $\gamma_i$, $f$ decreases length (respectively does not increase length), it follows that the length of $f(\gamma)$ is strictly less (respectively is not longer) than the length of $\gamma$. If $f(\gamma)$ is not straight, then its length larger than the distance from $f(x)$ to $f(y)$. So this completes the proof that $f$ is a compression (respectively a weak compression) as required. 

\end{proof}

\begin{Rem} Consider the construction of regular triangulations for polytopes. Suppose an isomorphism between the combinatorial polytopes $\Pi, \Pi'$ induces an isomorphism between the corresponding polytopes $P,P'$, which are convex hulls of vertices lifted from $\mathbb{R}^d$ to $\mathbb{R}^{d+1}$. Then there is an induced isomorphism between the lower boundaries and hence between the induced regular triangulations. So in this case we could use regular triangulations rather than barycentric subdivisions to define compression maps. 
\end{Rem}

The projective shape space $\mathcal{P} / \sim$ of a combinatorial polytope $\Pi$, is the quotient of the space of convex realizations $\mathcal{P}$ by the equivalence relation $\sim$ of homothety. $\mathcal{P} / \sim$ is the space of polytope shapes up to uniform scaling, capturing the intrinsic geometry of realizations, independent of absolute size.

So two shapes $P, Q \in \mathcal{P}$ are identified in the projective shape space $\mathcal{P} / \sim$, if there exists a positive scalar $\lambda > 0$ such that $P = \lambda Q$.
We will use the notation $[P]$ to denote the equivalence class of $P$ in $\mathcal{P} / \sim$.

\begin{Ex}
The space of shapes $\mathcal{P}$ of an $n$-gon $\Pi$, for $n \ge 3$ is the cone over the open $n$-simplex $\sigma_n$ embedded in $\mathbb{R}^{n+1}$ by $$\sigma_n = \{x_i: 0 < x_i < \pi, \Sigma_i x_i = (n-2)\pi\}$$ The projective space of shapes $\mathcal{P} / \sim$ is then $\sigma_n$. 

\end{Ex}

\begin{Def}
For the space of shapes of a combinatorial polytope $\mathcal{P}$, we define a relation $\preceq$ by $P \preceq Q$  if there exists a weak compression map $f: P \to Q$ for $P,Q \in \mathcal{P}$. 

\end{Def}

\begin{Prop} \label{partialorder}
The relation $\preceq$ defined by weak compression induces a partial order on the space $\mathcal{P}$ of shapes of a combinatorial polytope. 
\end{Prop}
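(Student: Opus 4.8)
The plan is to verify the three defining properties of a partial order for $\preceq$ on $\mathcal{P}$: reflexivity, transitivity and antisymmetry. Throughout, the structural input I would rely on is the uniqueness statement established at the start of Section~3: between any two shapes $P,Q$ of $\Pi$ there is exactly one piecewise linear map $P\to Q$ that is a linear isomorphism on each pair of corresponding simplices of $B(P),B(Q)$ and is induced by the vertex correspondence. Call this the canonical map. The whole argument hinges on the observation that canonical maps compose to canonical maps.

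For reflexivity, I would note that the canonical map $P\to P$ is induced by the identity vertex correspondence, fixes every barycentre, and is linear on each simplex of $B(P)$; by uniqueness it is $\mathrm{id}_P$, which is an isometry and hence a weak compression, so $P\preceq P$. For transitivity, given weak compressions $f\colon P\to Q$ and $g\colon Q\to R$, I would first check that $g\circ f$ is the canonical map $h\colon P\to R$: it is linear on each simplex of $B(P)$ (a linear isomorphism onto a simplex of $B(Q)$ followed by one onto a simplex of $B(R)$) and on the barycentres it realizes the composite of the vertex correspondences, which is exactly the $P$--$R$ correspondence; uniqueness gives $g\circ f=h$. Then for $x,y\in P$,
$$\|h(x)-h(y)\| = \|g(f(x))-g(f(y))\| \le \|f(x)-f(y)\| \le \|x-y\|,$$
the first inequality because $f(x),f(y)\in Q$ and $g$ is a weak compression, so $P\preceq R$.

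Antisymmetry is where the real content lies, and I expect it to be the main obstacle. Suppose $P\preceq Q$ and $Q\preceq P$ with canonical weak compressions $f\colon P\to Q$ and $g\colon Q\to P$. As in the transitivity step, $g\circ f$ is the canonical self-map of $P$, hence $\mathrm{id}_P$, so for all $x,y\in P$ we get $\|x-y\| = \|g(f(x))-g(f(y))\| \le \|f(x)-f(y)\| \le \|x-y\|$, which forces $\|f(x)-f(y)\|=\|x-y\|$ for all $x,y\in P$. Now $f$, being a linear isomorphism on each simplex of $B(P)$, is a bijection of $P$ onto $Q$, and it is distance preserving; since $P$ is a non-degenerate $d$-dimensional shape it has non-empty interior in $\mathbb{R}^d$, and a distance-preserving map on such a set is the restriction of a (unique) isometry of $\mathbb{R}^d$. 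Hence $P$ and $Q$ differ by an ambient isometry, i.e. they are the same element of $\mathcal{P}$. The two delicate points to get right are therefore (i) the bookkeeping that shows the composite of canonical maps is canonical, so that uniqueness can be invoked to identify $g\circ f$ with $\mathrm{id}_P$, and (ii) the upgrade of a distance-preserving piecewise linear bijection to a genuine Euclidean isometry, which is where non-degeneracy of the shapes is used.
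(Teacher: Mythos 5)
Your proof is correct and follows essentially the same route as the paper: reflexivity via the identity map, transitivity via composing the two inequalities, and antisymmetry by observing that $g\circ f$ is the canonical self-map of $P$, hence $\mathrm{id}_P$, which forces $f$ to preserve all distances. The only (harmless) difference is at the final step: the paper glues together the resulting simplex-wise isometries of the barycentric subdivisions, whereas you invoke the global fact that a distance-preserving surjection of a full-dimensional subset of $\mathbb{R}^d$ extends to an ambient isometry.
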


\begin{proof} For a shape $P \in \mathcal{P}$, the identity map $\operatorname{id}: P \to P$ is trivially a weak compression. So $P \preceq P$.

 Suppose $P, Q, R \in \mathcal{P}$ with weak compression maps $f: P \to Q$ and $g: Q \to R$. Then, for any $x, y \in P$
$$ \|g(f(x)) - g(f(y))\| \leq \|f(x) - f(y)\| \leq \|x - y\| $$
which implies that $g \circ f: P \to R$ is also a weak compression. Thus, $P \preceq Q$ and $Q \preceq R$ implies $P \preceq R$, proving transitivity.

Finally assume $P, Q \in \mathcal{P}$ and the maps $f: P \to Q$ and $g: Q \to P$ are weak compressions. Clearly $g \circ f$ induces the identity map on vertices and hence is the identity map on $P$. But then it follows  that $g,f$ are both distance-preserving maps on all the corresponding pairs of simplices of $B(P), B(Q)$. For if any pair of distinct points in a simplex has distance decreased by $f$ or $g$, then this would also be true for the composition $g \circ f$, giving a contradiction. It then follows that the isometries between the corresponding pairs of simplices of $B(P), B(Q)$ glue together to form an isometry between $P,Q$. 

This completes the proof that $\preceq$ is a partial order. 
\end{proof}

One might expect that if a piecewise linear map between two shapes of a combinatorial polytope induces a compression on the set of edges, then it must necessarily be a compression map. This intuition does not hold, even in simple cases.

\begin{Ex}
The map from triangle $P$ to triangle $Q$ is not a compression map in spite of the fact that it shrinks vertex-vertex and vertex-edge distances, because there exists a pair of points which it pushes away from each other. Consider:

\begin{align*}
\mathbf{p}_1 &= (0, 1.692); & \mathbf{q}_1 &= (3.452, 3.519) \\
\mathbf{p}_2 &= (3.452, 0.527); & \mathbf{q}_2 &= (4.696, 2.078) \\
\mathbf{p}_3 &= (1.901, 0); & \mathbf{q}_3 &= (4.393, 1.692)
\end{align*}
\vspace{-.7cm}
{\begin{figure} [htbp]
 \begin{center}
\includegraphics[width=.5\textwidth,angle=0]{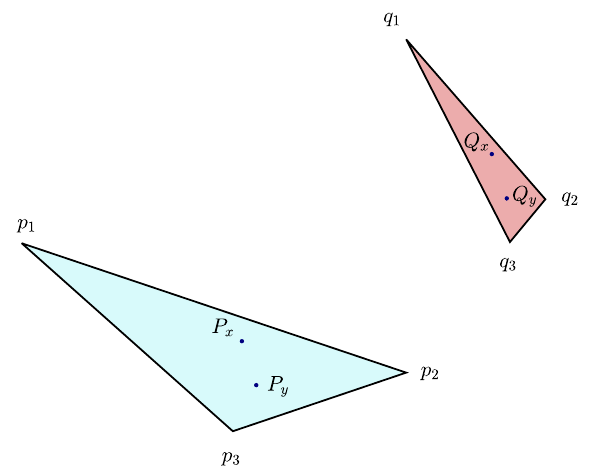}
\end{center}
\caption{A linear map between triangles inducing a compression on the edges is not necessarily a compression, since $ ||Q_x - Q_y || \leq  ||P_x - P_y|| $.}
\end{figure}}

Consider the maps from $ \sigma \subset \mathbb{R}^3 \to \mathbb{R}^2$, where $\sigma$ denotes the standard simplex:
$$ 
P = \begin{pmatrix}
0 & 3.452 & 1.901 \\
1.692 & 0.527 & 0
\end{pmatrix};
\quad 
Q = \begin{pmatrix}
3.452 & 4.696 & 4.393 \\
3.519 & 2.078 & 1.692
\end{pmatrix}
$$

Comparing vertex-vertex distances:
\begin{align*}
||\mathbf{p}_1- \mathbf{p}_2|| &= 3.64329; & ||\mathbf{q}_1- \mathbf{q}_2 || &= 1.90369 \\
||\mathbf{p}_1- \mathbf{p}_3|| &= 2.54493; & ||\mathbf{q}_1- \mathbf{q}_3|| &= 2.05509 \\
||\mathbf{p}_2- \mathbf{p}_3 || &= 1.63809; & ||\mathbf{q}_2 - \mathbf{q}_3|| &= 0.490719
\end{align*}

There exist a pair of points which the linear map in question pushes away from each other. Consider: 
$$
x = (0.34, 0.464, 0.196); \quad y = (0.149, 0.316, 0.535)
$$
$$
P_x = (1.97432, 0.819808); \quad P_y = (2.10787, 0.41864)
$$
$$
Q_x = (4.21365, 2.49228); \quad Q_y = (4.34854, 2.0862)
$$
$$
||P_x-P_y|| = 0.422811; \quad ||Q_x-Q_y|| = 0.427901
$$
Since, $$ || Q_x - Q_y || \leq || P_x - P_y || $$
the linear map between $P$ and $Q$ inducing a compression on the edges is not a compression.
\end{Ex}

\section{Weak Compressions and Compressions  of simplices as Orthogonal Projections}

In this section, we  study weak compression and compression maps between shapes of $d$-simplices. There is a unique linear map induced by locating two corresponding vertices at the origin and matching the other $d$ pairs of vertices. There is a more symmetric second way of constructing a linear map $\mathbb{R}^{d+1}\to\mathbb{R}^{d+1}$ by embedding the two simplices into the hyperplane with coordinate $x_{d+1}=1$. We discuss this below. 

Let $f:\mathbb{R}^d\to \mathbb{R}^d$ be a linear map. We denote $M\in M_d(R)$ as the standard matrix representation of $f$. $M$ is a weak compression, if $||Mx-My||\leq ||x-y||$ for all $x,y\in \mathbb{R}^d$. Note that we can also rewrite the weak compression condition as $$(x-y)^TM^TM(x-y) \le (x-y)^T(x-y)$$
A more succinct expression is $$I-M^TM \ge 0$$
Similarly the compression condition becomes $$I-M^TM > 0$$

\begin{Prop} \label{Proj} 
Orthogonal projections are weak compressions.
\end{Prop}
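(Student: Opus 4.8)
The plan is to show that an orthogonal projection onto a linear subspace satisfies the succinct criterion $I - M^TM \ge 0$ derived just above the statement. Let $\pi\colon \mathbb{R}^d \to \mathbb{R}^d$ be the orthogonal projection onto a linear subspace $W$, so that $M$ is the standard matrix of $\pi$. The key structural fact I would invoke is that an orthogonal projection is symmetric and idempotent: $M^T = M$ and $M^2 = M$. Consequently $M^TM = M^2 = M$, so the weak compression condition $I - M^TM \ge 0$ becomes simply $I - M \ge 0$, i.e. we must check that $M$ has all eigenvalues at most $1$.

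To see that $I - M \ge 0$, I would diagonalize: since $M$ is symmetric and idempotent its eigenvalues are all $0$ or $1$ (from $\lambda^2 = \lambda$), with eigenvalue $1$ on $W$ and eigenvalue $0$ on $W^\perp$. Hence $I - M$ is the orthogonal projection onto $W^\perp$, which is itself positive semidefinite, proving the weak compression inequality. Alternatively, and perhaps more transparently, for any vectors $x, y$ write $v = x - y$ and decompose $v = v_W + v_{W^\perp}$ into its components in $W$ and $W^\perp$; then $Mx - My = Mv = v_W$, so by the Pythagorean theorem $\|Mx - My\|^2 = \|v_W\|^2 \le \|v_W\|^2 + \|v_{W^\perp}\|^2 = \|v\|^2 = \|x-y\|^2$. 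Either route gives $\|Mx - My\| \le \|x - y\|$ for all $x, y$, which is exactly the definition of a weak compression.

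I would also remark that the inequality is strict precisely on those pairs for which $v_{W^\perp} \neq 0$, so an orthogonal projection onto a proper subspace is never a (strict) compression on all of $\mathbb{R}^d$ — which is consistent with the later discussion of realizing compressions via projections from pleated embeddings, where the source polytope is not contained in any single affine subspace parallel to $W$. Since the proposition is stated for linear maps $\mathbb{R}^d \to \mathbb{R}^d$, there is nothing further to patch up regarding affine projections (those differ from a linear one by a translation, which preserves all distances, so the same conclusion holds).

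I do not anticipate a genuine obstacle here; the only mild subtlety is bookkeeping the two equivalent formulations — making sure to state at the outset which characterization of orthogonal projection (symmetric idempotent matrix versus nearest-point map onto a subspace) is being used, and being careful that the ``orthogonal projection'' in question is onto a \emph{linear} subspace so that $M$ is linear. The Pythagorean computation is the cleanest to write down and is the version I would present in the paper.
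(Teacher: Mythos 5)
Your proof is correct and takes essentially the same approach as the paper, which also diagonalizes the projection matrix to a diagonal matrix of zeros and ones and concludes that norms cannot increase. Your Pythagorean decomposition $v = v_W + v_{W^\perp}$ is just a coordinate-free restatement of that same spectral fact (and is arguably cleaner than the paper's appeal to similarity, which should strictly be orthogonal similarity for the norm comparison to be immediate).
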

\begin{proof}  Let $M\in M_d(R)$ be the standard matrix representation of an orthogonal projection $p: \mathbb{R}^d \to \mathbb{R}^{d}$. The matrix $M$ is similar to a diagonal matrix $M'$ with $k$ ones and $d-k$ zeroes on the diagonal. Therefore, $||M'(x-y)|| \leq ||x-y||$, concluding the proof. 
\end{proof}

\begin{Def}
If $p$ is an orthogonal projection as in Proposition \ref{Proj}, we will call an affine line $L$ in $\mathbb{R}^d$ horizontal if the parallel line through the origin is orthogonal to the kernel of $p$. Alternatively, $L$ is horizontal if $p$ restricted to $L$ is an isometry to the line $p(L)$.

\end{Def}

\begin{Coro} \label{Proj1}
Suppose $P$ is a convex set in $\mathbb{R}^d$. Then the restriction of an orthogonal projection $p$ to $P$ is a weak compression. Moreover $p$ is a compression on $P$ if and only if any horizontal line meets $P$ is a single point. 

\end{Coro}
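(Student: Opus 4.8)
The plan is to prove Corollary \ref{Proj1} in two parts, extracting what we need from Proposition \ref{Proj}. First, since $P \subset \mathbb{R}^d$ is convex, any two points $x, y \in P$ are joined by a straight segment lying in $P$, so it suffices to show $\|p(x) - p(y)\| \le \|x-y\|$ for arbitrary $x, y$; but this is exactly Proposition \ref{Proj} applied to the linear part of $p$ (translating $P$ so that $p$ becomes linear changes nothing, as orthogonal projection onto an affine subspace differs from the linear one by a constant). Hence the restriction of $p$ to $P$ is automatically a weak compression, with no convexity needed for this half.

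For the characterization of when $p$ is a (strict) compression on $P$, I would work with the orthogonal decomposition $\mathbb{R}^d = \ker p \oplus (\ker p)^\perp$ and write each point as $x = x^\parallel + x^\perp$ with $x^\parallel \in (\ker p)^\perp$, $x^\perp \in \ker p$; then $p(x) = x^\parallel$ (after identifying the image with $(\ker p)^\perp$) and $\|p(x) - p(y)\|^2 = \|x^\parallel - y^\parallel\|^2 = \|x-y\|^2 - \|x^\perp - y^\perp\|^2$. Thus distance is preserved for the pair $x, y$ precisely when $x^\perp = y^\perp$, i.e. when $x - y \in \ker p$, which is to say $x$ and $y$ lie on a common horizontal line. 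Therefore $p$ fails to be a strict compression on $P$ if and only if there exist two distinct points of $P$ on a common horizontal line, i.e. if and only if some horizontal line meets $P$ in more than one point. Taking the contrapositive gives the stated biconditional: $p$ is a compression on $P$ iff every horizontal line meets $P$ in at most one point (a single point or empty, which is what ``in a single point'' should be read as when the line meets $P$ at all).

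The two directions then both follow immediately: if every horizontal line meets $P$ in a single point, then for distinct $x, y \in P$ we have $x - y \notin \ker p$, so $\|x^\perp - y^\perp\| > 0$ and $\|p(x)-p(y)\| < \|x-y\|$; conversely, if some horizontal line meets $P$ in two distinct points $x, y$, then $x - y \in \ker p$ forces $\|p(x) - p(y)\| = \|x - y\|$, so $p$ is not a strict compression. I do not anticipate a serious obstacle here; the only thing to be careful about is the identification of the image of $p$ with $(\ker p)^\perp$ so that the Pythagorean identity $\|x-y\|^2 = \|x^\parallel - y^\parallel\|^2 + \|x^\perp - y^\perp\|^2$ can be invoked cleanly, and the (purely linguistic) point that ``meets $P$ in a single point'' is meant in the sense of ``never meets $P$ in two or more points.''
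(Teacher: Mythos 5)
Your overall approach is the same as the paper's: the weak compression half is immediate from Proposition \ref{Proj}, and the compression half comes down to observing that $p$ preserves the distance between a pair of distinct points exactly when the line through them is horizontal. Your version is more explicit than the paper's (which simply asserts that $\|p(x)-p(y)\|=\|x-y\|$ forces the line through $x,y$ to be horizontal and declares the converse similar), and the Pythagorean decomposition is the right way to make that assertion rigorous. You are also right that convexity is not needed for the weak compression half, and your reading of ``meets $P$ in a single point'' as ``in at most one point'' is the intended one.

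However, there is a consistent mislabeling that, taken literally, makes several of your intermediate claims false. With your own conventions ($x^\perp \in \ker p$ is the kernel component, $x^\parallel \in (\ker p)^\perp$, $p(x)=x^\parallel$), the condition $x^\perp = y^\perp$ is equivalent to $x-y \in (\ker p)^\perp$, \emph{not} to $x-y \in \ker p$. Indeed, if $x-y \in \ker p$ then $p(x)=p(y)$ and the distance collapses to zero rather than being preserved. The same swap recurs in both directions of your final argument: ``$x-y \notin \ker p$, so $\|x^\perp - y^\perp\|>0$'' and ``$x-y\in\ker p$ forces $\|p(x)-p(y)\|=\|x-y\|$'' are both false as written; in each case $\ker p$ should be $(\ker p)^\perp$. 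Note that the paper's definition of horizontal is precisely that the direction of the line is \emph{orthogonal} to $\ker p$, so once the correction $x-y\in(\ker p)^\perp$ is made, every conclusion you draw (``lie on a common horizontal line,'' etc.) is correct and the proof goes through. This is a fixable slip rather than a gap in the idea, but as written the chain of equivalences does not hold.
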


\begin{proof}
The fact that $p$ restricted to $P$ is a weak compression follows immediately from Proposition \ref{Proj}. If this restriction is not a compression, then there are pairs of distinct points $x,y \in P$ so that $||p(x) - p(y)||=||x-y||$. This implies the line $L$ through $x,y$ is horizontal. So there is a horizontal line intersecting $P$ in a line segment. The converse is similar. 

\end{proof}

In \cite{zhang}, a square matrix $M$ is called a weak compression if it satisfies $||Mx-My|| \le ||x-y||$.
Theorem 5.9 in \cite{zhang} establishes that a real square matrix is a weak compression if and only if it can be embedded into an orthogonal matrix of twice the dimension.

\begin{Thm}\label{orthog} 
A $d \times d$ matrix $M$ is a weak compression if and only if the matrix
\[ U=
\left(\begin{array}{@{}cc@{}}
  \begin{matrix}
  \hspace{.1cm}M \\
  \end{matrix}
  & B \hspace{.1cm} \\
 \hspace{.1cm} A &
  \begin{matrix}
  C \hspace{.1cm} \\
  \end{matrix}
\end{array}\right)
\]
is orthogonal for suitable $d \times d$ matrices $A,B,C$.
\end{Thm}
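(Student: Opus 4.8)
The plan is to obtain both implications from the block expansion of the orthogonality relation $U^{T}U = I_{2d}$, whose top-left block is $M^{T}M + A^{T}A$. The forward direction is then immediate: if $U$ is orthogonal, then $M^{T}M + A^{T}A = I_{d}$, so $I_{d} - M^{T}M = A^{T}A \geq 0$, and by the reformulation of the weak compression condition recorded just above the statement, this is precisely the assertion that $M$ is a weak compression.

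For the converse, suppose $M$ is a weak compression, so that $I_{d} - M^{T}M \geq 0$. Since $I_{d} - M^{T}M$ is symmetric and positive semidefinite it has a symmetric positive semidefinite square root; set $A := (I_{d} - M^{T}M)^{1/2}$, so that $M^{T}M + A^{T}A = A^{T}A + A^{2}$ wait — more precisely $M^{T}M + A^{T}A = M^{T}M + A^{2} = I_{d}$. This says exactly that the $d$ columns of the $2d \times d$ matrix $\left(\begin{smallmatrix} M \\ A \end{smallmatrix}\right)$ form an orthonormal system in $\mathbb{R}^{2d}$. I would then extend this system to an orthonormal basis of $\mathbb{R}^{2d}$, which is possible since $d \leq 2d$, and let $B, C \in M_{d}(\mathbb{R})$ be the blocks, stacked as $\left(\begin{smallmatrix} B \\ C \end{smallmatrix}\right)$, whose columns are the $d$ vectors added. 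The resulting $2d \times 2d$ matrix $U$ has orthonormal columns, hence is orthogonal, and its top-left block is $M$ by construction.

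An alternative, more explicit construction uses the singular value decomposition $M = P \Sigma Q^{T}$ with $P, Q \in O(d)$ and $\Sigma = \operatorname{diag}(\sigma_{1}, \dots, \sigma_{d})$; the weak compression condition is equivalent to $\sigma_{i} \leq 1$ for all $i$, so $\Sigma' := \operatorname{diag}(\sqrt{1 - \sigma_{1}^{2}}, \dots, \sqrt{1 - \sigma_{d}^{2}})$ is real, the block matrix $\left(\begin{smallmatrix} \Sigma & -\Sigma' \\ \Sigma' & \Sigma \end{smallmatrix}\right)$ is orthogonal (the diagonal blocks commute and $\Sigma^{2} + \Sigma'^{2} = I_{d}$), and multiplying it on the left by $\operatorname{diag}(P, P)$ and on the right by $\operatorname{diag}(Q^{T}, Q^{T})$ yields an orthogonal $U$ with top-left block $M$. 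I do not expect a genuine obstacle here — the statement is Theorem~5.9 of \cite{zhang} in the present notation, and the essential content is the elementary fact that an orthonormal system in a Euclidean space extends to an orthogonal matrix. The only place the hypothesis actually enters is in guaranteeing that the real matrix $A = (I_{d} - M^{T}M)^{1/2}$ (equivalently $\Sigma'$) exists, i.e. that $I_{d} - M^{T}M \geq 0$; by the forward direction this same condition is forced on every admissible $U$, so it is exactly the right hypothesis.
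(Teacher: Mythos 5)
Your proof is correct and the converse is essentially identical to the paper's argument: take the positive semidefinite square root $A=(I-M^{T}M)^{1/2}$, observe that the columns of $\bigl(\begin{smallmatrix} M \\ A \end{smallmatrix}\bigr)$ are orthonormal, and extend to an orthonormal basis of $\mathbb{R}^{2d}$ to produce $B$ and $C$. The only (minor) difference is in the forward direction, where the paper argues geometrically via the inclusion $x\mapsto(x,0)$ and the inequality $\|Mx-My\|\le\|Uf(x)-Uf(y)\|=\|x-y\|$, while you read off $I-M^{T}M=A^{T}A\ge 0$ from the block expansion of $U^{T}U=I$; both are valid one-line arguments.
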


\begin{proof}
Let $f:\mathbb{R}^d \to \mathbb{R}^{2d}$ be the inclusion map given by $$f(a_1, \dots a_n) =(a_1, \dots ,a_n, 0, \dots ,0)$$
If $M$ embeds as a block in an orthogonal matrix $U$, then clearly $$||Mx-My|| \le ||Uf(x)-Uf(y)||=||f(x)-f(y)||=||x-y|| $$
Hence $M$ is a weak compression map. 

Let $g:\mathbb{R}^d \to \mathbb{R}^{2d}$ be the inclusion map given by $$g(a_1, \dots a_n) =(0, \dots ,0, a_1, \dots ,a_n)$$ To construct $U$ it clearly suffices to show we can find vectors $v_i$, $ 1 \le i \le d$ in $\mathbb{R}^{d}$ so that $f(x_i)+g(v_i)$ forms an orthonormal set. 

Now, the compression condition is $I-M^TM \ge 0$. Let $A$ be a real symmetric $d \times d$ nonnegative matrix such that $A^2=I-M^TM$. Then it is easy to check that the columns of $A$ give the required vectors $v_i$. It is now straightforward to extend the orthonormal set of size $d$ to an orthonormal basis for $\mathbb{R}^{2d}$ giving the matrices $B,C$.
\end{proof}

\begin{Coro} \label{ext}
A weak compression $f: \mathbb{R}^d \to \mathbb{R}^d$ can be extended to an orthogonal transformation ${\hat f}: \mathbb{R}^{2d} \to \mathbb{R}^{2d}$.
\end{Coro}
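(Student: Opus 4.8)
The plan is to read this off directly from Theorem \ref{orthog}. Given a weak compression $f:\mathbb{R}^d\to\mathbb{R}^d$ with standard matrix $M$, that theorem produces $d\times d$ matrices $A,B,C$ for which the $2d\times 2d$ block matrix $U=\begin{pmatrix} M & B \\ A & C\end{pmatrix}$ is orthogonal. I would simply let $\hat f:\mathbb{R}^{2d}\to\mathbb{R}^{2d}$ be the linear transformation whose standard matrix is $U$; since $U$ is an orthogonal matrix, $\hat f$ is an orthogonal transformation of $\mathbb{R}^{2d}$.

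It then remains only to make precise the sense in which $\hat f$ extends $f$. Write $\iota:\mathbb{R}^d\to\mathbb{R}^{2d}$, $\iota(x)=(x,0)$, for the isometric inclusion onto the first factor, and $\pi:\mathbb{R}^{2d}\to\mathbb{R}^d$ for the orthogonal projection onto that same factor. From the block form we compute $U\iota(x)=(Mx,Ax)$, hence $\pi\hat f\iota(x)=Mx=f(x)$, so that $f=\pi\circ\hat f\circ\iota$. Thus composing the isometric inclusion of $\mathbb{R}^d$ into $\mathbb{R}^{2d}$, the orthogonal transformation $\hat f$, and the orthogonal projection back onto $\mathbb{R}^d$ recovers $f$; combined with Proposition \ref{Proj}, this re-exhibits every weak compression of $\mathbb{R}^d$ as a composition of an isometry with an orthogonal projection.

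I do not anticipate any genuine obstacle, since all the content sits in Theorem \ref{orthog}. The only point requiring care is the meaning of ``extension'': the restriction of $\hat f$ to $\mathbb{R}^d\times\{0\}$ does not in general land back in $\mathbb{R}^d\times\{0\}$ (it is $x\mapsto(Mx,Ax)$), so one genuinely needs the inclusion--projection pair $(\iota,\pi)$, and a literal restriction would force $f$ to already be an isometry. I would therefore phrase the corollary, and its proof, in terms of $f=\pi\circ\hat f\circ\iota$.
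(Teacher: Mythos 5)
Your proposal is correct and is essentially the paper's own argument: the paper's proof of this corollary is the single line ``This follows from Theorem \ref{orthog},'' i.e.\ take $\hat f$ to be the transformation with matrix $U$. Your added clarification that ``extends'' must be read as $f=\pi\circ\hat f\circ\iota$ (rather than as a literal restriction to $\mathbb{R}^d\times\{0\}$) is accurate and consistent with how the corollary is used in Corollary \ref{comp}.
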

\begin{proof} 
This follows from Theorem \ref{orthog}
\end{proof}

\begin{Coro} \label{comp}
Given two shapes $P,Q$ of a $d$ simplex, there is a weak compression $P \to Q$ if and only if there is an isometric embedding of $P$ in $\mathbb{R}^{2d}$ so that the orthogonal projection to $\mathbb{R}^{d}$ takes $P$ onto $Q$.
\end{Coro}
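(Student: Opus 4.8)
The plan is to pass between the matrix criterion of Theorem~\ref{orthog} and the geometry of orthogonal projections. Throughout, I fix representatives of the shapes $P,Q$ in which a pair of corresponding vertices is placed at the origin, so that the canonical map $f\colon P\to Q$ is the linear map $x\mapsto Mx$ matching the remaining $d$ pairs of vertices, $Q=f(P)$, and the statement ``$P$ weakly compresses to $Q$'' means exactly $I-M^{T}M\ge 0$.

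Forward direction. Assume $I-M^{T}M\ge 0$. By Theorem~\ref{orthog} there are $d\times d$ matrices $A,B,C$ for which $U=\begin{pmatrix} M & B \\ A & C\end{pmatrix}$ is orthogonal on $\mathbb{R}^{2d}=\mathbb{R}^{d}\times\mathbb{R}^{d}$. Let $j\colon\mathbb{R}^{d}\to\mathbb{R}^{2d}$ be the isometric inclusion $x\mapsto(x,0)$ and put $\iota=U\circ j$, restricted to $P$. Since $j$ is an isometric embedding and $U$ is orthogonal, $\iota$ is an isometric embedding of $P$ into $\mathbb{R}^{2d}$, and $\iota(x)=(Mx,Ax)$. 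If $\pi\colon\mathbb{R}^{2d}\to\mathbb{R}^{d}$ is the orthogonal projection onto the first factor, then $\pi\circ\iota=f$, so $\pi$ takes $\iota(P)$ onto $f(P)=Q$; note also that this particular $\iota$ respects the vertex correspondence, since $\pi(\iota(v_{i}))=f(v_{i})$.

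Converse. Suppose $\iota\colon P\to\mathbb{R}^{2d}$ is an isometric embedding such that an orthogonal projection $\pi$ onto a $d$-dimensional coordinate subspace carries $\iota(P)$ onto $Q$, with $\pi(\iota(v_{i}))$ the vertex of $Q$ corresponding to $v_{i}$ (the natural reading of ``takes $\iota(P)$ onto $Q$'', and automatic for the embedding built above). Because $P$ is a full-dimensional simplex it affinely spans $\mathbb{R}^{d}$, and a standard rigidity argument for Euclidean isometric embeddings shows that $\iota$ is the restriction of an affine isometry $\mathbb{R}^{d}\to\mathbb{R}^{2d}$; in particular $\iota(P)$ is convex. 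Corollary~\ref{Proj1} (equivalently Proposition~\ref{Proj}) then says that $\pi$ restricted to $\iota(P)$ is a weak compression, so $g:=\pi\circ\iota\colon P\to Q$ satisfies $\|g(x)-g(y)\|\le\|\iota(x)-\iota(y)\|=\|x-y\|$ for all $x,y\in P$. Finally $g$ is affine with $g(v_{i})$ the corresponding vertex of $Q$, hence $g=f$, and therefore $P$ weakly compresses to $Q$.

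The linear algebra in the forward step (checking that $\iota$ is isometric and that $\pi\circ\iota=f$) is immediate from the block form of $U$, so I expect no real difficulty there. The point deserving care is the converse: one must first establish that an abstract isometric embedding of the $d$-simplex $P$ is automatically affine before Corollary~\ref{Proj1} is applicable, and one must track the vertex labelling so that the projected map is literally the map $f$ of the Definition, rather than an affine bijection $P\to Q$ differing from it by a combinatorial automorphism of the simplex. I anticipate this bookkeeping, not any new idea, to be the only obstacle.
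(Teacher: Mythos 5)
Your proof is correct and follows essentially the same route as the paper: the direction from weak compression to embedding uses the orthogonal extension supplied by Theorem~\ref{orthog} (the paper invokes this via Corollary~\ref{ext}), and the reverse direction uses that orthogonal projections are weak compressions (Proposition~\ref{Proj}/Corollary~\ref{Proj1}). The extra care you take in the converse --- that an isometric embedding of a full-dimensional simplex is necessarily affine, and that the vertex labelling must be tracked so the projected map is literally $f$ --- is sound and fills in details the paper's two-sentence proof leaves implicit.
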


\begin{proof}
One direction follows, since orthogonal projections are weak compressions by Proposition \ref{Proj}. The converse follows by Corollary \ref{ext}. For, if there is a weak compression from $P$ to $Q$, the extension to an orthogonal map on $\mathbb{R}^{2d}$ gives the desired isometric embedding realising the compression by an orthogonal projection. 
\end{proof}

\begin{Coro}
Given two shapes $P,Q$ of a $d$ simplex, there is a compression $P \to Q$ if and only the following is satisfied. 

\begin{itemize}
\item There is an isometric embedding of $P$ in $\mathbb{R}^{2d}$ so that the orthogonal projection to $\mathbb{R}^{d}$ takes $P$ onto $Q$.

\item The embedding of $P$ in $\mathbb{R}^{2d}$ intersects any horizontal line in at most one point. 

\end{itemize}

\end{Coro}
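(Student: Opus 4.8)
The plan is to obtain this characterisation by combining Corollary~\ref{comp}, which already handles the weak case, with Corollary~\ref{Proj1}, which says exactly when an orthogonal projection is strictly distance-decreasing on a convex body. Throughout, write $f\colon P\to Q$ for the unique affine map matching corresponding vertices, so that ``there is a compression $P\to Q$'' means precisely ``$f$ is a compression'', and read ``the orthogonal projection takes $P$ onto $Q$'' (here and in Corollary~\ref{comp}) as the statement that the projection $p\colon\mathbb{R}^{2d}\to\mathbb{R}^d$ onto the first $d$ coordinates, composed with the given isometric embedding $j\colon P\hookrightarrow\mathbb{R}^{2d}$, equals $f$; this is automatic as soon as $p\circ j$ sends the embedded vertices of $P$ to the matching vertices of $Q$, since an affine map of a simplex is determined on its vertices. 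Finally, regard $p$ as the symmetric idempotent orthogonal projection of $\mathbb{R}^{2d}$ onto the subspace $\mathbb{R}^d\times\{0\}$, with kernel $\{0\}\times\mathbb{R}^d$, so that Corollary~\ref{Proj1} is available with $2d$ in place of $d$; a horizontal line then means a line parallel to $\mathbb{R}^d\times\{0\}$.

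For the forward implication, assume $f$ is a compression. In particular $f$ is a weak compression, so Corollary~\ref{comp} yields an isometric embedding $j\colon P\hookrightarrow\mathbb{R}^{2d}$ with $p\circ j=f$; this is the first bullet. Since $j$ is an isometry, the restriction of $p$ to the convex set $j(P)\subset\mathbb{R}^{2d}$ differs from the compression $f$ only by precomposition with the isometry $j$, hence is itself a compression on $j(P)$; Corollary~\ref{Proj1} (with $2d$ in place of $d$) then forces every horizontal line to meet $j(P)$ in at most one point, which is the second bullet.

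For the reverse implication, assume both bullets, with $j$ the isometric embedding and $p\circ j=f$. By the second bullet and Corollary~\ref{Proj1}, the restriction of $p$ to the convex set $j(P)$ is a compression, so for distinct $x,y\in P$ we get
$$||f(x)-f(y)||=||p(j(x))-p(j(y))||<||j(x)-j(y)||=||x-y||,$$
whence $f$ is a compression. The only point needing a small comment is the passage from Corollary~\ref{Proj1}, stated for a projection $\mathbb{R}^d\to\mathbb{R}^d$, to the mixed-dimension setting $\mathbb{R}^{2d}\to\mathbb{R}^d$; this is harmless, since the proof of that corollary uses only convexity of the domain together with the fact that an orthogonal projection restricted to a line is an isometry precisely when the line is horizontal. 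I do not anticipate any genuine obstacle: once the two cited corollaries are in hand, the argument is essentially bookkeeping about dimensions and about the meaning of ``takes $P$ onto $Q$''.
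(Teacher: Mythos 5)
Your proof is correct and follows the same route as the paper, which simply cites Corollary~\ref{Proj1} (together, implicitly, with Corollary~\ref{comp}) for this statement. You have merely filled in the bookkeeping the paper leaves implicit --- the identification of ``takes $P$ onto $Q$'' with the canonical affine map $f$, and the harmless passage from a projection of $\mathbb{R}^d$ to one of $\mathbb{R}^{2d}$ onto $\mathbb{R}^d\times\{0\}$ --- both of which are handled correctly.
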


\begin{proof}
This follows using Corollary \ref{Proj1}

\end{proof}

\begin{Prop}
Suppose $f:P \to Q$ is a weak compression which is not a compression between two shapes for a $d$-simplex. Then
there is a vertex $x$ and a distinct point $y$ so $||f(x)-f(y)||=||x-y||$.
If $f$ is either a compression or a weak compression, there is a pair of points $x,y$ for which $\frac{||f(x)-f(y)||}{||x-y||}$ achieves a maximum value for $x,y \in P$ where at least one of $x,y$ is a vertex. 

\end{Prop}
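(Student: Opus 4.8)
The plan is to use the fact that on a simplex $P$ the map $f$ is a single affine map. I would write the vertices of $P$ as $v_0,\dots,v_d$ and let $M$ be the linear part of $f$, so that $f(x)-f(y)=M(x-y)$ for all $x,y\in P$; by the discussion preceding Proposition~\ref{Proj}, $f$ is a weak compression iff $I-M^{T}M\ge 0$ and a compression iff $I-M^{T}M>0$. Because $P$ is $d$-dimensional, $P-P$ contains a neighbourhood of the origin, so every direction of $\mathbb{R}^d$ arises as $x-y$ for some pair $x,y\in P$; since $\|f(x)-f(y)\|/\|x-y\|=\|Mu\|/\|u\|$ with $u=x-y$ depends only on the direction of $u$, the supremum of this ratio over all pairs equals $\|M\|=\sigma_1(M)$ and is attained precisely when $x-y$ lies in the top eigenspace $W$ of the symmetric matrix $M^{T}M$. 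Likewise, if $f$ is a weak compression but not a compression then $\sigma_1(M)=1$ and $\|f(x)-f(y)\|=\|x-y\|$ holds exactly when $x-y\in W=\ker(I-M^{T}M)\neq\{0\}$. So both assertions reduce to a single geometric statement: given a nonzero linear subspace $W\subseteq\mathbb{R}^d$, produce a vertex $v_i$ of $P$ and a point $y\in P\setminus\{v_i\}$ with $y-v_i$ parallel to a nonzero vector of $W$ (then take $x=v_i$).

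To do this I would fix a unit vector $u_0\in W$ and take a chord $[a,b]\subseteq P$ parallel to $u_0$ of maximal length; since $f$ is affine and $u_0\in W$, $f$ restricted to $[a,b]$ realises the extremal ratio (and is an isometry when $\sigma_1=1$), so it suffices to show that at least one of $a,b$ is a vertex of $P$. Near a vertex $v_i$ the simplex coincides with the simplicial cone $v_i+C_i$, $C_i=\operatorname{cone}(v_j-v_i:\,j\neq i)$ (only the $d$ facets through $v_i$ are active there), so the conclusion I want is that $u_0\in C_i$ or $-u_0\in C_i$ for some $i$; writing $u_0=\sum_j c_j v_j$ with $\sum_j c_j=0$, this says that at most one of the $c_j$ is negative, or at most one is positive.

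For $d\le 2$ this is straightforward: projecting $P$ along $u_0$ onto a line, the chord-length function is piecewise linear and concave with zero boundary values, so its maximum sits on the slice through the ``middle'' vertex and a maximal chord has a vertex endpoint. For $d\ge 3$ I would try induction on $\dim P$ through the faces of $P$: if a maximal chord $[a,b]$ lies in a proper face $F$ (e.g.\ when the minimal faces of $P$ containing $a$ and $b$ coincide, forcing $u_0$ parallel to $F$), apply the inductive hypothesis in $F$; otherwise contradict maximality by translating $a$ and $b$ within their minimal faces along a direction that lengthens $[a,b]$, using the sign conditions on the active inward facet normals (some $F_k\ni a$ has $\langle n_k,u_0\rangle>0$ and some $F_l\ni b$ has $\langle n_l,u_0\rangle<0$), the relation $\sum_k\mu_k n_k=0$ with $\mu_k>0$ among the inward facet normals of a simplex, and the freedom to move $u_0$ inside $W$ when $\dim W>1$.

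The hard part will be exactly this last geometric claim — deciding which vertices of a simplex a prescribed linear space of chord-directions can ``see'', and in particular showing that a maximal chord parallel to some direction of $W$ can always be taken to end at a vertex. The orthogonal-projection model of Corollary~\ref{comp} (isometrically embed $P$ in $\mathbb{R}^{2d}$ so that $f$ becomes the orthogonal projection, in which case the equality pairs are the horizontal chords) gives an equivalent reformulation in $\mathbb{R}^{2d}$, but it does not dissolve this combinatorial difficulty, which is where I expect the bulk of the work to lie.
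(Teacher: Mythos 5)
Your reduction is correct and is essentially the paper's own first step: since $f$ is affine, the ratio $\|f(x)-f(y)\|/\|x-y\|$ depends only on the direction of $x-y$, the extremal directions form the top eigenspace $W$ of $M^TM$ (which is $\ker(I-M^TM)$ in the equality case), and everything comes down to producing a chord of $P$ with a vertex endpoint whose direction lies in $W$. You are also right that this last step is where all the content lies, and right to be suspicious of it: the claim you need is \emph{false} for $d\ge 3$ when $\dim W=1$. In your own barycentric formulation, a direction $u_0=\sum_j c_jv_j$ with $\sum_j c_j=0$ is realized by a chord with a vertex endpoint only if at most one $c_j$ is negative or at most one is positive; for $d\ge 3$ the sign pattern $(+,+,-,-)$ is available and violates both conditions. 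Concretely, let $P$ be the regular tetrahedron with vertices $v_0=(1,1,1)$, $v_1=(1,-1,-1)$, $v_2=(-1,1,-1)$, $v_3=(-1,-1,1)$, and let $u_0=\tfrac12(v_0+v_1-v_2-v_3)=(2,0,0)$. A short barycentric computation shows that the line through any vertex parallel to $u_0$ meets $P$ in that vertex alone. Taking $M=\mathrm{diag}(1,\tfrac12,\tfrac12)$ and $Q=M(P)$ gives a weak compression $f=M|_P$ between two nondegenerate tetrahedra which is not a compression (the chord joining the midpoints of the edges $v_0v_1$ and $v_2v_3$ has its length preserved), yet no pair $x,y$ with $x$ a vertex satisfies $\|f(x)-f(y)\|=\|x-y\|$, and the maximum of the ratio over pairs containing a vertex is attained on the compact set of directions in the tangent cone at a vertex and is strictly less than $1$, whereas the maximum over all pairs equals $1$. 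So both assertions of the Proposition fail for this example.

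This means the gap is not in your argument alone. The paper's proof disposes of the crucial step in one sentence --- ``we can choose $x'$ as a vertex and $y'$ a point in the interior of the simplex so that $x'-y'$ is a real multiple of $x-y$'' --- and that sentence is exactly the false claim isolated above. Your $d\le 2$ argument (three barycentric coefficients summing to zero must have at most one of some sign) is sound, so the Proposition does hold for triangles; but for $d\ge 3$ no amount of work on the induction you sketch will close the gap, because the target statement is not true. The conclusion would have to be weakened (for instance, to assert that the extremal ratio is attained on a chord joining points of two proper faces, or on a pair of barycentres) or supplemented by hypotheses on $W$ ruling out balanced sign patterns.
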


\begin{proof}

Suppose that $x,y$ is a pair of points for which $||f(x)-f(y)||=||x-y||$. Notice that since $f$ is a linear map, any other pair of points $x',y'$ for which $x-y$ and $x'-y'$ are linearly dependent vectors satisfies the same value for $\frac{||f(x')-f(y')||}{||x'-y'||}$. But then we can choose $x'$ as a vertex and $y'$ a point in the interior of the simplex so that $x'-y'$ is a real multiple of $x-y$. Then clearly $||f(x')-f(y')||=||x'-y'||$.

Consider any two points $x,y$ for which $\frac{||f(x)-f(y)||}{||x-y||}$ achieves a maximum value. By the same argument as the previous paragraph, we can choose a vertex $x'$ and a distinct point $y$ for which $\frac{||f(x')-f(y')||}{||x'-y'||}$ achieves a maximum value. 

\end{proof}

\section{Weak Compressions and Compressions for some Polytopes}

In this section, we extend the characterization of compressions and weak compressions between shapes of simplices as orthogonal projections to the case of shapes of a special class of polytopes. 

\begin{Def} (pleated embedding).
Suppose that $\Pi$ is a $d$-dimensional combinatorial polytope with simplicial facets and with a triangulation $\mathcal T$ with vertices those of $\Pi$. Then a pleated realization of $\Pi$ is a collection of embedded convex $d$-simplices in $\mathbb{R}^{d'}$, for $d' > d$. The $d$-simplices intersect in faces corresponding to the triangulation $\mathcal T$. At each $(d-1)-$dimensional face shared by simplices $\sigma, \sigma'$, the angle between $\sigma, \sigma'$ is not necessarily $\pi$. Equivalently, $\sigma, \sigma'$ are not necessarily contained in the same $d$-dimensional affine subspace of $\mathbb{R}^{d'}$. Suppose that $\sigma_1, \dots ,\sigma_k$ are all the $d$-simplices of $\mathcal T$ that share a common codimension $2$ face $\tau$. We also require that the sum of the dihedral angles between the facets of $\sigma_1, \dots ,\sigma_k$  at $\tau$ is strictly less than $\pi$.
\end{Def} 

\begin{Ex}
A quadrilateral has two different triangulations without additional vertices, given by a choice of a diagonal. Choose one of these triangulations. If the quadrilateral is embedded in $\mathbb{R}^3$ so that the two triangles are affinely embedded and meet at an angle different from $\pi$ along the diagonal, the quadrilateral is no longer convex, but is pleated.
Note that we also require that the sum of the angles of the triangles at each of the two vertices at the diagonal is strictly less than $\pi$. 
\end{Ex}

\begin{Def}
Let $\mathcal{C}$ be the class of polytopes with simplicial facets that can be triangulated by successive truncations at vertices. 
\end{Def}

\begin{Def}
The face-pairing graph $\Gamma$ of a triangulation $\mathcal T$ of a $d$-dimensional polytope has vertices associated with each $d$-simplex of $\mathcal T$ and an edge connecting two such vertices if the associated simplices share a $(d-1)$-face. 

\end{Def}

\begin{Rem}
We will be interested in the special case where the  triangulation $\mathcal T$ of a polytope has a face-pairing graph $\Gamma$ which is a tree. The polytopes of this type are exactly those in the class $\mathcal C$. There are then two metrics induced on an embedded pleated realization of the polytope -- the extrinsic one from the ambient Euclidean space and the intrinsic one from the metrics on the simplices. The latter metric will be used to describe a shape of the polytope. 
\end{Rem}

\begin{Thm} \label{pleated}
Suppose that we are given two shapes $P, Q$ of a $d$-polytope $\Pi \in \mathcal{C}$ with a triangulation $\mathcal{T}$ with $t$ $d$-simplices. Assume the map $f:P \to Q$ defined using $\mathcal{T}$ is a weak compression or a compression. Then $f$ can be represented by orthogonal projection from a pleated embedding $p$ of $P$ into $\mathbb{R}^{d(t+1)}$ to the convex embedding $Q$ of $\Pi$ in $\mathbb{R}^d$. Here the intrinsic metric on the image of $p$ is isometric to $P$.  Conversely, any orthogonal projection from a pleated embedding of $P$ in $\mathbb{R}^{d+k}$ to a convex embedding of $Q$ in $\mathbb{R}^d$ is a weak compression map.
\end{Thm}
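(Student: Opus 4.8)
The plan is to prove the two implications separately. The converse follows quickly from the fact that orthogonal projections are weak compressions, so the substance is the forward direction, which I will obtain by an inductive ``unfolding'' of $P$ that follows the tree structure of the face-pairing graph $\Gamma$ of $\mathcal T$ (recall that $\Pi\in\mathcal C$ is exactly the hypothesis that $\Gamma$ is a tree). For the converse, let $q\colon P\to\mathbb R^{d+k}$ be a pleated realization of $\Pi$ whose intrinsic metric is that of $P$, and let $\mathrm{pr}\colon\mathbb R^{d+k}\to\mathbb R^d$ be an orthogonal projection carrying $q(P)$ onto the convex embedding $Q$. On each $d$-simplex $\sigma_i$ of $\mathcal T$ the composite $\mathrm{pr}\circ q$ is affine and sends the vertices of $\sigma_i$ to those of the corresponding simplex of $Q$, so it equals $f|_{\sigma_i}$; since $q|_{\sigma_i}$ is an isometry onto a convex simplex and $\mathrm{pr}$ is an orthogonal projection, Proposition \ref{Proj} shows $f|_{\sigma_i}$ is a weak compression, and running the segment-decomposition argument of Proposition \ref{comptriang} with $\mathcal T$ in place of the barycentric triangulation shows $f$ is a weak compression on all of $P$.

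For the forward direction, write $\mathbb R^{d(t+1)}=V_0\times V_1\times\dots\times V_t$ with each $V_i\cong\mathbb R^d$, placing $Q$ in the base factor $V_0$ and reserving $V_i$ for the $i$-th simplex of $\mathcal T$. Since $f$ is a weak compression (and a compression is in particular a weak compression, so the same argument covers both cases), each $f|_{\sigma_i}$ is a weak compression between the shapes $\sigma_i$ and $\tau_i=f(\sigma_i)$, so its linear part $M_i$ satisfies $I-M_i^TM_i\ge0$; as in the proof of Theorem \ref{orthog}, $I-M_i^TM_i$ has a symmetric nonnegative square root. Because $\Gamma$ is a tree, order the simplices so that each $\sigma_j$ with $j\ge2$ meets a unique predecessor $\sigma_{\pi(j)}$ along a facet $F_j$. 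I will build $p\colon P\to\mathbb R^{d(t+1)}$ one simplex at a time in the form $p|_{\sigma_i}=(f|_{\sigma_i},\,h_i)$, with $h_i$ an affine map into $V_1\times\dots\times V_t$ and the $h_i$ chosen to glue along shared facets.

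Start with $h_1$ of linear part the symmetric nonnegative square root of $I-M_1^TM_1$, valued in $V_1$. Inductively, once $p$ is defined on $\sigma_1\cup\dots\cup\sigma_{j-1}$, the map $h_j$ is forced on $F_j$ to equal $h_{\pi(j)}|_{F_j}$. Since $f|_{\sigma_j}$ and $f|_{\sigma_{\pi(j)}}$ agree on $F_j$, their derivatives agree on the direction space $W_j$ of $F_j$, i.e.\ $M_jw=M_{\pi(j)}w$ for $w\in W_j$; consequently the linear part prescribed on $W_j$ has the Gram form $(w,w')\mapsto w^T(I-M_j^TM_j)w'$. A standard extension of partial isometries — factor $I-M_j^TM_j=R^TR$, recognize the prescribed map on $W_j$ as $R|_{W_j}$ composed with a linear isometry onto its image, and extend that isometry in the ambient space, which has ample dimension — produces a linear map $N_j\colon\mathbb R^d\to V_1\times\dots\times V_t$ with $N_j^TN_j=I-M_j^TM_j$ that restricts correctly on $W_j$; the one new direction normal to $F_j$ inside $\sigma_j$ is accommodated in the unused slot $V_j$. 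Then on each $\sigma_i$, $\|p(x)-p(y)\|^2=\|M_i(x-y)\|^2+\|N_i(x-y)\|^2=\|x-y\|^2$, so $p|_{\sigma_i}$ is an affine isometry onto a convex $d$-simplex.

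Finally one assembles the pieces: since $\Gamma$ is a tree, the pattern in which the $\sigma_i$ meet in $P$ is forced and every common face of simplices of $\mathcal T$ is pinned inductively to its value on a common predecessor, so the $p|_{\sigma_i}$ agree on overlaps, $p$ is a well-defined continuous map, it is injective (as $\mathrm{pr}\circ p=f$ is a bijection of $P$ onto $Q$), the metric obtained by gluing the convex simplices $p(\sigma_i)$ along their shared facets is by construction that of $P$, and $\mathrm{pr}\circ p=f$. The dihedral-angle conditions of a pleated realization are inherited from $P$: since $\Gamma$ is a tree there are no codimension-two faces of $\mathcal T$ interior to $P$, and the truncation structure of $\Pi\in\mathcal C$ forces the remaining codimension-two faces of $\mathcal T$ to lie at genuine faces of the convex set $P$, where the angle sum is $<\pi$. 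Thus $p$ is a pleated realization of $P$ in $\mathbb R^{d(t+1)}$ representing $f$ as an orthogonal projection. I expect the inductive extension to be the main obstacle — pushing an affine isometry across one more facet while keeping the base coordinates equal to $f$ — which is exactly where the compatibility $M_jw=M_{\pi(j)}w$ on $W_j$ and the nonnegativity $I-M_j^TM_j\ge0$ are used together; the gluing bookkeeping over faces of codimension $\ge2$ and the verification of the angle conditions are more routine but still rely on the tree hypothesis and the structure of $\mathcal C$.
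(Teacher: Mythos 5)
Your proof is correct and follows essentially the same route as the paper: induction over the tree structure of the face-pairing graph, lifting one simplex at a time via a square root of $I-M^TM$ (the mechanism of Theorem \ref{orthog} and Corollary \ref{comp}) with $d$ spare dimensions per simplex, and the converse via Proposition \ref{Proj} together with the segment argument of Proposition \ref{comptriang}. You supply considerably more detail than the paper at the crucial gluing step --- the compatibility of the linear parts on a shared facet and the extension of the resulting partial isometry --- which the paper compresses into ``this follows by the same method as Corollary \ref{comp}.''
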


\begin{proof}
By Corollary \ref{comp}, if $\mathcal T$ is a single simplex, then the result follows. Let $P, Q$ be the two shapes of $\Pi$, where there is a weak compression or a compression map $P \to Q$. It is sufficient to glue together lifts of shapes of the simplices in the triangulation of $P$ to $\mathbb{R}^{2d}$ given by Corollary \ref{comp}. If the face-pairing graph $\Gamma$ for the simplicial decomposition of $\Pi$ is a tree with $t$ vertices, then we will show that such a gluing of lifts can be achieved by $t-1$ applications of Corollary \ref{comp}. This will give the required pleated embedding in $\mathbb{R}^{d(t+1)}$. 

The proof will be by induction on $t$. So suppose we have already constructed such a pleated embedding for the first $t-1$ simplices of $\mathcal T$. The last simplex has one vertex opposite a face which is in the previous set of simplices and so has already been embedded in $\mathbb{R}^{dt}$. So it suffices to show that if we allow an extra factor of $\mathbb{R}^{d}$ for the position of the last vertex, then the required pleated embedding can be extended over the final vertex. But this follows by the same method as Corollary \ref{comp}.
\end{proof}

\begin{Thm}\label{sequence}
Let $Q$ be a shape of a polytope $\Pi$ in the class $\mathcal{C}$, where $\Pi$ has simplicial facets and triangulation $\mathcal{T}$. Then $\Pi$ admits a sequence of pleated embeddings $P_{k}$, for $1 \le k \le dt$, respectively into spaces 
$
\mathbb{R}^{d+1}, \mathbb{R}^{d+2}, \dots, \mathbb{R}^{d(t+1)}
$
such that at each step, the embedding $P_{k}$ yields a compression map or weak compression map via orthogonal projection to the realization $Q$. Moreover each embedding $P_{k}$ in the sequence projects to all the previous embeddings $P_{i}$ for $1 \le i \le k-1$ via orthogonal projection. 

Finally, any compression or weak compression between two realizations $P,Q$ of $\Pi$ in $\mathbb{R}^d$ can be constructed as an orthogonal projection from a pleated  embedding $P_k$ of $P$ in $\mathbb{R}^{d+k}$ for suitable $k =dt$. Note that the intrinsic metric of $P_k$ can be chosen to be isometric to the required shape $P$ for each $k$. 
\end{Thm}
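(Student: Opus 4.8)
The plan is to run the inductive construction in the proof of Theorem \ref{pleated}, but to refine it so that the lift happens one coordinate at a time rather than $d$ coordinates at a time, and to carry along the compatibility with all earlier lifts as part of the inductive hypothesis. Recall that for the class $\mathcal C$ the face-pairing graph $\Gamma$ of $\mathcal T$ is a tree, so we may order the $t$ simplices $\sigma_1,\dots,\sigma_t$ of $\mathcal T$ so that each $\sigma_{j}$ ($j\ge 2$) meets the union $\sigma_1\cup\dots\cup\sigma_{j-1}$ in exactly one $(d-1)$-face, equivalently $\sigma_j$ has exactly one vertex $v_j$ not already placed. When we attach $\sigma_j$ we must choose the position of the single new vertex $v_j$, which in the proof of Theorem \ref{pleated} was done by introducing a fresh copy of $\mathbb{R}^d$. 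But locating one point subject to fixed distances to $d$ already-placed affinely independent points uses at most $d$ new coordinates, and we may introduce them one at a time: after fixing the first new coordinate of $v_j$ the remaining freedom is constrained to a lower-dimensional sphere, and so on. This gives the telescoping sequence $\mathbb{R}^{d+1}\subset\mathbb{R}^{d+2}\subset\dots\subset\mathbb{R}^{d(t+1)}$, with $P_k$ the partial embedding living in $\mathbb{R}^{d+k}$.

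The key point that makes each step a weak compression (or compression) via orthogonal projection to $Q$ is exactly the mechanism of Corollary \ref{comp} / Theorem \ref{orthog}: at the moment we decide the position of the new vertex (or of its next coordinate), the constraint we must satisfy is that the induced linear map on the relevant simplex, from $P_k$-geometry down to $Q$-geometry, be a weak compression, and this is equivalent to the nonnegativity condition $I-M^TM\ge 0$ for the restricted linear map $M$; the symmetric square root $A$ with $A^2=I-M^TM$ supplies precisely the extra coordinate data (Theorem \ref{orthog}). Since $f:P\to Q$ is assumed a weak compression, its restriction to each simplex of $\mathcal T$ is a weak compression, so such an $A$ exists at every stage, and we simply feed in one column of $A$ at a time; in the strict case $I-M^TM>0$ the same construction is available and the projection restricted to the finished simplex is injective (Corollary \ref{Proj1}), hence a compression. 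The compatibility "$P_k$ projects orthogonally onto $P_i$ for $i<k$'' is built in: dropping the last-introduced coordinate is literally an orthogonal projection by construction, and a composition of orthogonal projections onto nested coordinate subspaces is again one; one only has to check that this forgetful projection sends the convex/pleated simplices of $P_k$ onto those of $P_i$, which holds because the vertex set of $P_i$ is the image of the vertex set of $P_k$ under that projection and the maps are affine on each simplex.

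For the final clause, we apply the above to the given weak compression or compression $f:P\to Q$ between two $d$-dimensional realizations: the terminal embedding $P_{dt}$ lives in $\mathbb{R}^{d(t+1)}=\mathbb{R}^{d+k}$ with $k=dt$, its intrinsic metric is isometric to $P$ by construction (each simplex is lifted isometrically, and the gluing respects the shared faces since those are already placed before the new vertex is added), and orthogonal projection $\mathbb{R}^{d+k}\to\mathbb{R}^d$ realizes $f$. The main obstacle I anticipate is bookkeeping rather than conceptual: one must verify that introducing the coordinates of the new vertex singly never forces a value that is incompatible with the already-fixed coordinates, i.e. that the partial sphere of admissible positions stays nonempty at each of the (up to) $d$ sub-steps, and that padding with zero coordinates for the not-yet-placed vertices keeps every $P_k$ a genuine pleated embedding with the correct intrinsic metric and with the required dihedral-angle condition inherited from $P$. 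This is exactly the place where one must be careful that "pleated'' (rather than merely "a collection of simplices glued along $\mathcal T$'') is preserved, and where the tree structure of $\Gamma$ is essential, since it guarantees that attaching $\sigma_j$ never creates a new codimension-$2$ cycle whose dihedral angles would need to be re-checked.
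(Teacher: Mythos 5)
Your proposal reaches the right conclusion but runs the construction in the opposite direction from the paper, and the comparison is instructive. The paper's proof is top--down: it first invokes Theorem \ref{pleated} to produce the single terminal pleated embedding of $P$ in $\mathbb{R}^{d(t+1)}$ realizing the given (weak) compression, and then \emph{defines} each intermediate $P_k$ as the image of that terminal embedding under orthogonal projection onto $\mathbb{R}^{d+k}$; since orthogonal projections are weak compressions (Proposition \ref{Proj}) and compose, the whole sequence with its compatibility conditions comes for free. Your proof is bottom--up: you re-run the induction of Theorem \ref{pleated} but introduce the new coordinates of each newly attached vertex one at a time, using the tree structure of $\Gamma$ and the square root $A$ with $A^2 = I - M^TM$ from Theorem \ref{orthog} to supply the coordinate data column by column. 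This is more constructive and closer in spirit to the algorithmic Remark following the theorem, but it is also where your one acknowledged obstacle lives: a greedy choice of a single coordinate of the new vertex could a priori leave the remaining admissible set empty. The cleanest way to discharge that worry is precisely the paper's move --- observe that the full lift of $v_j$ into $\mathbb{R}^{d(t+1)}$ is already known to exist by Theorem \ref{pleated}, and then read off its coordinates one at a time (equivalently, truncate), so the "partial sphere'' is nonempty by fiat. With that amendment your argument is sound; note also that both your proof and the paper's leave implicit the check that the intermediate configurations (your zero-padded partial lifts, the paper's projected images) still satisfy the non-degeneracy and dihedral-angle conditions in the definition of a pleated embedding, and that for intermediate $k$ the intrinsic metric of $P_k$ is in general \emph{not} that of $P$ --- only the terminal $P_{dt}$ is isometric to $P$.
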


\begin{proof}

By Theorem \ref{pleated}, the final assertion of the Theorem follows. So given two shapes $P,Q$ of $\Pi$ in $\mathcal C$ with a compression or weak compression between them, we can realize $P$ by a pleated embedding in $\mathbb{R}^{t(d+1)}$.

But now we can orthogonally project from $\mathbb{R}^{t(d+1)}$ to $\mathbb{R}^{(d+k)}$ for $1 \le k<td$ and achieve the required sequence of pleated embeddings of $\Pi$ using the images of $P$.

\end{proof}

\begin{Rem}
Theorem \ref{sequence} can be used to build all compressions and weak compressions by successively lifting one dimension at a time. Given a shape $Q$ of $\Pi$ in $\mathbb{R}^{d}$, there is a simple method to construct all liftings of each $d$-simplex to $\mathbb{R}^{d+1}$. 
Namely some vertex can be located at the origin of $\mathbb{R}^{d+1}$ and then all the other vertices have the same coordinates in $\mathbb{R}^{d}$ and an arbitrary height in the extra coordinate. This can be iterated until all possible compressions and weak compressions are constructed.

\end{Rem}

\section{Compression Metric and Perturbations of Shapes of Simplices}

We embed the $d$-simplices ${P} = \mbox{conv}(\mathbf{p}_1, \ldots, \mathbf{p}_{d + 1})$ and ${Q} = \mbox{conv}(\mathbf{q}_1, \ldots, \mathbf{q}_{d + 1})$ in the hyperplane $x_{d + 1} = 1$ of $\mathbb{R}^{d+1}$, so their coordinates form the columns of the matrices  
$$
\mathbf{P} = \begin{pmatrix}
\mathbf{p}_1 & \hdots & \mathbf{p}_{d + 1}\\
1 & \hdots & 1
\end{pmatrix}, \quad
\mathbf{Q} = \begin{pmatrix}
\mathbf{q}_1 & \hdots & \mathbf{q}_{d + 1}\\
1 & \hdots & 1
\end{pmatrix}
$$

The affine linear transformation mapping ${P}$ to ${Q}$ is given by $\mathbf{A} = \mathbf{Q} \mathbf{P}^{-1}$. By removing the last row and column of $\mathbf{A}$, we obtain a reduced $d \times d$ matrix denoted as $\overline{\mathbf{A}}$.

\begin{Lem} \label{contrac}
A linear map $\mathbf{A}$ is a weak compression if and only it is a contraction, meaning $\|\mathbf{A}\| \leq 1$. It is a compression if and only if $\|\mathbf{A}\| < 1$.
\end{Lem}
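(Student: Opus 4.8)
The plan is to reduce everything to the definition of the operator norm $\|\mathbf{A}\| = \sup_{v \neq 0} \|\mathbf{A}v\|/\|v\|$ together with the elementary fact that a linear map acts on difference vectors, $\mathbf{A}x - \mathbf{A}y = \mathbf{A}(x-y)$. (When $\mathbf{A}$ is the affine transformation $\mathbf{Q}\mathbf{P}^{-1}$ of shapes living in the hyperplane $x_{d+1} = 1$, the constant term cancels in $f(x) - f(y)$ and the relevant linear map is the reduced matrix $\overline{\mathbf{A}}$, so the Lemma is applied to $\overline{\mathbf{A}}$; below I write $\mathbf{A}$ for whatever $d \times d$ linear map is under consideration.) First I would record that the weak compression condition $\|\mathbf{A}x - \mathbf{A}y\| \le \|x - y\|$ for all $x, y$ becomes, on substituting $v = x - y$ and letting $v$ range over all of $\mathbb{R}^d$, exactly $\|\mathbf{A}v\| \le \|v\|$ for every $v$, i.e.\ $\|\mathbf{A}\| \le 1$. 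This settles the first equivalence with nothing more than unwinding the definitions.

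For the second equivalence, the compression condition requires $\|\mathbf{A}v\| < \|v\|$ for every \emph{nonzero} $v$, equivalently $\|\mathbf{A}v\| < 1$ for every unit vector $v$. One direction is immediate: $\|\mathbf{A}\| < 1$ gives $\|\mathbf{A}v\| \le \|\mathbf{A}\|\,\|v\| < \|v\|$ whenever $v \neq 0$. For the converse I would use compactness of the unit sphere $S^{d-1} \subset \mathbb{R}^d$: since $v \mapsto \|\mathbf{A}v\|$ is continuous, its supremum over $S^{d-1}$, which is precisely $\|\mathbf{A}\|$, is attained at some unit vector $v_0$, and then the pointwise hypothesis gives $\|\mathbf{A}\| = \|\mathbf{A}v_0\| < 1$. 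I expect this upgrade from the pointwise strict inequality to the uniform bound $\|\mathbf{A}\| < 1$ to be the only real (though minor) point in the argument — it is exactly where finite-dimensionality enters, and it would fail without compactness.

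Finally, I would note that this is consistent with, and can alternatively be derived from, the matrix criteria $I - \mathbf{A}^T\mathbf{A} \ge 0$ (resp.\ $> 0$) recorded in Section 4: since $\|\mathbf{A}\|^2 = \lambda_{\max}(\mathbf{A}^T\mathbf{A})$, the condition $\|\mathbf{A}\| \le 1$ is equivalent to all eigenvalues of the symmetric positive semidefinite matrix $\mathbf{A}^T\mathbf{A}$ being at most $1$, i.e.\ $I - \mathbf{A}^T\mathbf{A} \ge 0$, and likewise with strict inequalities throughout. This spectral reformulation also ties the Lemma directly to the eigenvalue description of the compression metric developed in the remainder of the section.
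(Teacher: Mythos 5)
Your proof is correct and follows essentially the same route as the paper's: both reduce the pointwise compression conditions to statements about the operator norm via $\mathbf{A}x-\mathbf{A}y=\mathbf{A}(x-y)$. In fact you are slightly more careful than the paper on the strict case, where the paper just says it ``follows analogously'' while you correctly identify that upgrading $\|\mathbf{A}v\|<\|v\|$ for all $v\neq 0$ to $\|\mathbf{A}\|<1$ needs compactness of the unit sphere.
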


\begin{proof}
$\mathbf{A}$ is a weak compression if $\|\mathbf{A} \mathbf{x}\| \leq \|\mathbf{x}\|$ for all $\mathbf{x} \in \mathbb{R}^d$. Taking the supremum over unit vectors gives $\|\mathbf{A}\| \leq 1$. Conversely, if $\|\mathbf{A}\| \leq 1$, then for any $\mathbf{x} = r \mathbf{y}$ with $\|\mathbf{y}\| = 1$  
$$
\|\mathbf{A} \mathbf{x}\| = r \|\mathbf{A} \mathbf{y}\| \leq r = \|\mathbf{x}\|
$$
 The compression case follows analogously.
\end{proof}

\begin{Prop}  
The set of linear weak compressions forms a convex subset of the space of linear maps from $\mathbb{R}^d$ to itself.
\end{Prop}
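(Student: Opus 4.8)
The plan is to reduce the statement to the elementary fact that the closed unit ball of a norm is convex. By Lemma \ref{contrac}, a linear map $\mathbf{A}:\mathbb{R}^d\to\mathbb{R}^d$ is a weak compression if and only if $\|\mathbf{A}\|\le 1$, where $\|\cdot\|$ denotes the operator norm. Hence the set of linear weak compressions is precisely the closed unit ball $\{\mathbf{A}\in M_d(\mathbb{R}) : \|\mathbf{A}\|\le 1\}$. Since the operator norm is a genuine norm on $M_d(\mathbb{R})$, it is subadditive and positively homogeneous, so for $\mathbf{A},\mathbf{B}$ with $\|\mathbf{A}\|,\|\mathbf{B}\|\le 1$ and $t\in[0,1]$ we get $\|t\mathbf{A}+(1-t)\mathbf{B}\|\le t\|\mathbf{A}\|+(1-t)\|\mathbf{B}\|\le t+(1-t)=1$. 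This is exactly convexity of the set.

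Alternatively, I would give the argument directly from the definition without invoking the operator norm, which is essentially the same computation. Suppose $f$ and $g$ are linear weak compressions and set $h = tf+(1-t)g$ for $t\in[0,1]$. For any $x,y\in\mathbb{R}^d$, using the triangle inequality in $\mathbb{R}^d$,
$$
\|h(x)-h(y)\| = \|t\bigl(f(x)-f(y)\bigr)+(1-t)\bigl(g(x)-g(y)\bigr)\| \le t\|f(x)-f(y)\| + (1-t)\|g(x)-g(y)\| \le t\|x-y\|+(1-t)\|x-y\| = \|x-y\|,
$$
so $h$ is a weak compression.

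I do not expect any genuine obstacle here; the only point requiring a moment's care is that the equivalent formulation $I-\mathbf{M}^T\mathbf{M}\ge 0$ from Section 4 is \emph{not} manifestly convex in $\mathbf{M}$, because $\mathbf{M}\mapsto \mathbf{M}^T\mathbf{M}$ is quadratic rather than affine; so the proof should be routed through the operator norm (via Lemma \ref{contrac}) or through the direct triangle-inequality estimate above, rather than through the matrix-inequality description. One could also remark that the same argument shows the set of linear compressions is a convex set (the open unit ball), and that both sets are invariant under pre- and post-composition with orthogonal maps.
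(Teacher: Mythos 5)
Your proof is correct, and it actually proves more than the paper's own argument does. The paper's proof only considers $B=\lambda A+(1-\lambda)I$, i.e.\ a convex combination of a weak compression $A$ with the \emph{identity}; that establishes only that the set is star-shaped about $I$, not that it is convex. Your direct argument takes two arbitrary linear weak compressions $f,g$ and bounds $\|h(x)-h(y)\|$ for $h=tf+(1-t)g$ by the triangle inequality, which is the estimate actually needed (and is the same computation the paper performs, just run with $g$ in place of $I$). Your first route, identifying the set with the closed unit ball of the operator norm via Lemma \ref{contrac} and invoking convexity of norm balls, is a clean packaging of the same fact and makes the scale/composition invariances you mention transparent. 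Your side remark is also well taken: the description $I-\mathbf{M}^T\mathbf{M}\ge 0$ is not manifestly convex in $\mathbf{M}$ (the set it cuts out \emph{is} convex, e.g.\ by a Schur-complement reformulation, but that is not the easy road), so routing through the norm or the direct estimate is the right choice. The parallel claim for compressions (open unit ball) is fine, since on a finite-dimensional space $\|Mv\|<\|v\|$ for all $v\ne 0$ is equivalent to $\|M\|<1$ by compactness of the unit sphere.
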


\begin{proof}
Suppose $A$ is a weak compression and let $\lambda \in [0,1]$.
Define 
$
B =\lambda A +(1 - \lambda)\,I,
$
where $I$ is the identity map. Then, for any \(\mathbf{x} \in \mathbb{R}^d\), we have
$$
\|B\mathbf{x}\| \;=\; \|\lambda A \mathbf{x} + (1-\lambda)\mathbf{x}\|
\;\le\; \lambda \|A \mathbf{x}\| + (1-\lambda)\|\mathbf{x}\| \;\le\; \|\mathbf{x}\|.
$$
Thus $B$ is also a compression.
\end{proof}

\begin{Prop}\label{evallesseq1}
The affine map $\mathbf{A} = \mathbf{Q} \mathbf{P}^{-1}$ sending a $d$-simplex ${P}$ to a $d$-simplex ${Q}$ is a weak compression if and only if all eigenvalues of $\overline{\mathbf{A}}^T \overline{\mathbf{A}}$ are at most one.
\end{Prop}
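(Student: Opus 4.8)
The plan is to reduce the statement to Lemma \ref{contrac} by pinning down how $\mathbf{A}$ acts on difference vectors of points of $P$. First I would observe that, since the columns of $\mathbf{P}$ are the vertices of a genuine (non-degenerate) $d$-simplex lying in the hyperplane $x_{d+1}=1$, they form a basis of $\mathbb{R}^{d+1}$; hence $\mathbf{A}=\mathbf{Q}\mathbf{P}^{-1}$ is the unique linear self-map of $\mathbb{R}^{d+1}$ carrying $(\mathbf{p}_i,1)$ to $(\mathbf{q}_i,1)$, and since it preserves the last coordinate on a spanning set it preserves it everywhere. Therefore $\mathbf{A}$ has the block form $\left(\begin{smallmatrix}\overline{\mathbf{A}} & \mathbf{b}\\ 0 & 1\end{smallmatrix}\right)$, so that $\mathbf{A}(\mathbf{v},0)=(\overline{\mathbf{A}}\mathbf{v},0)$ for every $\mathbf{v}\in\mathbb{R}^d$.

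Next I would translate the weak compression condition. Any two points of $P$ are of the form $\mathbf{x}=(\tilde{\mathbf{x}},1)$ and $\mathbf{y}=(\tilde{\mathbf{y}},1)$, so $\mathbf{x}-\mathbf{y}=(\tilde{\mathbf{x}}-\tilde{\mathbf{y}},0)$ and $f(\mathbf{x})-f(\mathbf{y})=\mathbf{A}(\mathbf{x}-\mathbf{y})=(\overline{\mathbf{A}}(\tilde{\mathbf{x}}-\tilde{\mathbf{y}}),0)$; hence $\|f(\mathbf{x})-f(\mathbf{y})\|=\|\overline{\mathbf{A}}(\tilde{\mathbf{x}}-\tilde{\mathbf{y}})\|$ while $\|\mathbf{x}-\mathbf{y}\|=\|\tilde{\mathbf{x}}-\tilde{\mathbf{y}}\|$. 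Thus $f$ is a weak compression on $P$ exactly when $\|\overline{\mathbf{A}}\mathbf{v}\|\le\|\mathbf{v}\|$ for every $\mathbf{v}$ of the form $\tilde{\mathbf{x}}-\tilde{\mathbf{y}}$ with $\mathbf{x},\mathbf{y}\in P$. Since $P$ is full-dimensional, the set of such difference vectors contains a ball about the origin, and the inequality is homogeneous of degree one in $\mathbf{v}$; so this is equivalent to $\|\overline{\mathbf{A}}\mathbf{v}\|\le\|\mathbf{v}\|$ for all $\mathbf{v}\in\mathbb{R}^d$, i.e.\ to $\overline{\mathbf{A}}$ being a linear weak compression in the sense of Lemma \ref{contrac}.

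Finally I would invoke Lemma \ref{contrac}: $\overline{\mathbf{A}}$ is a linear weak compression if and only if $\|\overline{\mathbf{A}}\|\le 1$. Because $\|\overline{\mathbf{A}}\|^2=\sup_{\|\mathbf{v}\|=1}\mathbf{v}^{T}\overline{\mathbf{A}}^{T}\overline{\mathbf{A}}\,\mathbf{v}$ is the largest eigenvalue of the symmetric positive semidefinite matrix $\overline{\mathbf{A}}^{T}\overline{\mathbf{A}}$, and all eigenvalues of that matrix are nonnegative, the condition $\|\overline{\mathbf{A}}\|\le 1$ is equivalent to every eigenvalue of $\overline{\mathbf{A}}^{T}\overline{\mathbf{A}}$ being at most one. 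Chaining these equivalences yields the proposition. The only genuinely delicate point is the passage from the bounded simplex $P$ to all of $\mathbb{R}^d$ in the second paragraph — this is precisely where non-degeneracy of the shape $P$ is used — while the rest is bookkeeping together with the standard spectral description of the operator norm.
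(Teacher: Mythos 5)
Your proof is correct and follows essentially the same route as the paper's: both reduce the weak compression condition to the quadratic-form inequality $\mathbf{v}^T\overline{\mathbf{A}}^T\overline{\mathbf{A}}\mathbf{v}\le\mathbf{v}^T\mathbf{v}$ on difference vectors and then read off the eigenvalue bound, your detour through Lemma \ref{contrac} and the operator norm being equivalent to the paper's direct spectral decomposition. The only difference is that you spell out two steps the paper leaves implicit --- the block-triangular form of $\mathbf{A}$ justifying why $\overline{\mathbf{A}}$ governs the action on differences, and the fact that differences of points of the full-dimensional simplex $P$ contain a ball about the origin, so the inequality propagates to all of $\mathbb{R}^d$ --- which is a welcome tightening rather than a new idea.
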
 
\begin{proof}

If $\mathbf{A}$ is a compression, then for any $\mathbf{x}, \mathbf{y} \in {P}$,
$$
(\mathbf{A} \mathbf{x} - \mathbf{A} \mathbf{y})^T (\mathbf{A} \mathbf{x} - \mathbf{A} \mathbf{y}) \leq (\mathbf{x} - \mathbf{y})^T (\mathbf{x} - \mathbf{y}).
$$
Using $\mathbf{A} = \mathbf{Q} \mathbf{P}^{-1}$ and restricting to the affine subspace, we obtain
$$
(\mathbf{x} - \mathbf{y})^T \overline{\mathbf{A}}^T \overline{\mathbf{A}} (\mathbf{x} - \mathbf{y}) \leq (\mathbf{x} - \mathbf{y})^T (\mathbf{x} - \mathbf{y}).
$$
Since this holds for all $\mathbf{x}, \mathbf{y}$, it follows that all eigenvalues of $\overline{\mathbf{A}}^T \overline{\mathbf{A}}$ are at most one.

Conversely, if all eigenvalues of $\overline{\mathbf{A}}^T \overline{\mathbf{A}}$ satisfy $\lambda_i \leq 1$, then for any vector $\mathbf{v}$,
$$
\mathbf{v}^T \overline{\mathbf{A}}^T \overline{\mathbf{A}} \mathbf{v} 
= \sum_{i=1}^{n} \lambda_i c_i^2 \leq \sum_{i=1}^{n} c_i^2 = \mathbf{v}^T \mathbf{v}
$$
Applying this to $\mathbf{v} = \mathbf{x} - \mathbf{y}$ for any $\mathbf{x}, \mathbf{y} \in {P}$ shows that distances do not increase under $\mathbf{A}$, concluding the proof.
\end{proof}

\subsection{Perturbations of shapes of simplices}

\begin{Prop}
    An infinitesimal perturbation of the shape $\mathbf{P}$ of a simplex in direction $\mathbf{V}$ is a weak compression if and only if $\overline{\mathbf{V}\mathbf{P}^{-1}} +(\overline{\mathbf{V}\mathbf{P}^{-1}})^T$ is negative semidefinite.

\end{Prop}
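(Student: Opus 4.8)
The plan is to linearize the weak--compression criterion of Proposition~\ref{evallesseq1} along the one--parameter family of shapes $\mathbf{P}(t)=\mathbf{P}+t\mathbf{V}$. Here $\mathbf{V}$ is the direction of the perturbation, a matrix whose last row vanishes, so that $\mathbf{P}(t)$ again has all its vertices in the hyperplane $x_{d+1}=1$; since invertibility is an open condition, $\mathbf{P}(t)$ is a genuine shape of the simplex for all sufficiently small $t$. First I would record that the affine transformation carrying $\mathbf{P}$ to $\mathbf{P}(t)$ is
$$\mathbf{A}(t)=\mathbf{P}(t)\mathbf{P}^{-1}=I+t\,\mathbf{V}\mathbf{P}^{-1},$$
and therefore, since deleting the last row and column is a linear operation and $\overline{I}=I_d$, the reduced $d\times d$ matrix is $\overline{\mathbf{A}(t)}=I+tN$ with $N:=\overline{\mathbf{V}\mathbf{P}^{-1}}$.

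Next I would expand the quadratic form that governs distances: for $\mathbf{x},\mathbf{y}\in P$,
$$\bigl\|\overline{\mathbf{A}(t)}(\mathbf{x}-\mathbf{y})\bigr\|^2=(\mathbf{x}-\mathbf{y})^T(I+tN)^T(I+tN)(\mathbf{x}-\mathbf{y})=\|\mathbf{x}-\mathbf{y}\|^2+t\,(\mathbf{x}-\mathbf{y})^T(N+N^T)(\mathbf{x}-\mathbf{y})+t^2\bigl\|N(\mathbf{x}-\mathbf{y})\bigr\|^2.$$
By Proposition~\ref{evallesseq1} (equivalently, by the quadratic--form inequality established inside its proof), the perturbation is an infinitesimal weak compression exactly when the first--order term of this expansion is non--positive for every pair of points, i.e.
$$\frac{d}{dt}\Big|_{t=0}\bigl\|\overline{\mathbf{A}(t)}(\mathbf{x}-\mathbf{y})\bigr\|^2=(\mathbf{x}-\mathbf{y})^T(N+N^T)(\mathbf{x}-\mathbf{y})\le 0\qquad\text{for all }\mathbf{x},\mathbf{y}\in P.$$

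Finally, since $P$ is a non--degenerate $d$--simplex, the difference set $\{\mathbf{x}-\mathbf{y}:\mathbf{x},\mathbf{y}\in P\}$ contains a neighbourhood of $0$ in $\mathbb{R}^d$, so by homogeneity of the quadratic form the displayed inequality for all such differences is equivalent to $\mathbf{w}^T(N+N^T)\mathbf{w}\le 0$ for every $\mathbf{w}\in\mathbb{R}^d$, that is, to $N+N^T=\overline{\mathbf{V}\mathbf{P}^{-1}}+(\overline{\mathbf{V}\mathbf{P}^{-1}})^T$ being negative semidefinite, which is the claim.

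I expect the only genuinely delicate point to be pinning down the meaning of ``infinitesimal'': one must take the first--order (derivative) condition rather than asking that $\mathbf{A}(t)$ be an honest weak compression for all small $t>0$, because the second--order term $t^2\|N(\mathbf{x}-\mathbf{y})\|^2$ is strictly positive whenever $N$ fails to annihilate all of $\ker(N+N^T)$, and would then spoil the weak--compression property for every $t\neq 0$ even though $N+N^T$ is negative semidefinite. Discarding this $t^2$ contribution is precisely what makes the clean ``negative semidefinite'' criterion the correct statement. The remaining points --- that $\mathbf{V}\mathbf{P}^{-1}$ has vanishing last row so that $\mathbf{A}(t)$ is affine and $\overline{\mathbf{A}(t)}=I+tN$, and that $\mathbf{P}(t)$ stays invertible --- are routine bookkeeping.
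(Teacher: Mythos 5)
Your proposal is correct and follows essentially the same route as the paper: both arguments linearize the criterion of Proposition~\ref{evallesseq1} along $\mathbf{A}(t)=I+t\,\mathbf{V}\mathbf{P}^{-1}$ and observe that the first-order term of $\overline{\mathbf{A}(t)}^T\overline{\mathbf{A}(t)}$ at $t=0$ is exactly $\overline{\mathbf{V}\mathbf{P}^{-1}}+(\overline{\mathbf{V}\mathbf{P}^{-1}})^T$. Your additional remarks --- that $\mathbf{V}$ must have vanishing last row, and that the nonnegative $t^2\|N(\mathbf{x}-\mathbf{y})\|^2$ term means the first-order criterion is genuinely weaker than being an honest weak compression for small $t\neq 0$ --- are correct points of care that the paper's own proof passes over silently.
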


\begin{proof}
Let $\mathbf{P}$ and $\mathbf{Q} = \mathbf{P} + t \mathbf{V}$ be the vertex matrices of the simplices $P$ and $Q$, for $t \in \mathbb{R}$. (We abuse notation by suppressing the dependence of $Q, \mathbf{Q}$ on $t$.) The affine map sending $P$ to $Q$ is given by

\begin{equation*}
\mathbf{A} = \mathbf{Q} \mathbf{P}^{-1} = (\mathbf{P} + t \mathbf{V}) \mathbf{P}^{-1} = \mathbf{I} + t \mathbf{V} \mathbf{P}^{-1}
\end{equation*}

By Proposition \ref{evallesseq1} the eigenvalues of $\overline{\mathbf{A}}^T \overline{\mathbf{A}}$ must be less than or equal to one, ensuring that the map does not expand any distances. For an infinitesimal perturbation to be a compression, the eigenvalues of the derivative $\frac{d}{dt} \overline{\mathbf{A}}^T \overline{\mathbf{A}} \big|_{t = 0}$ must be less than or equal to zero. By the product rule, we have

\begin{equation*}
(\overline{\mathbf{A}}^T \overline{\mathbf{A}})' = \overline{\mathbf{A}}^T \overline{\mathbf{A}}' + (\overline{\mathbf{A}}^T)' \overline{\mathbf{A}},
\end{equation*}

from which it follows

\begin{align*}
\frac{d}{dt} \overline{\mathbf{A}}^T \overline{\mathbf{A}} \big|_{t = 0} &= \overline{\mathbf{A}}' + (\overline{\mathbf{A}}^T)'\\
\end{align*}

Thus, the weak compression property is preserved if and only if $\overline{\mathbf{A}}' + (\overline{\mathbf{A}}^T)'$ is negative semidefinite. Since $\overline{\mathbf{A}}=\overline{\mathbf{I}} +t {\overline{\mathbf{Q} \mathbf{P}^{-1}}}$ the result follows. 
\end{proof}

Next, we note that a perturbation that does not increase edge lengths can still induce a first-order weak compression, shrinking distances from a vertex to an interior point of an edge.

\begin{Ex}
Consider a rhombus with vertices  
$$
\mathbf{p}_1 = (-a,0), \quad \mathbf{p}_2 = (0,b), \quad \mathbf{p}_3 = (a,0), \quad \mathbf{p}_4 = (0,-b),
$$
where all edge lengths are equal. Define an infinitesimal perturbation:  
$$
\mathbf{v}_1 = (0, -\epsilon), \quad \mathbf{v}_2 = (\epsilon, 0), \quad \mathbf{v}_3 = (0, \epsilon), \quad \mathbf{v}_4 = (-\epsilon, 0).
$$
This moves horizontal vertices vertically and vertical ones horizontally, preserving edge lengths, since  
$$
\frac{d}{dt} \|\mathbf{p}_i + t\mathbf{v}_i - (\mathbf{p}_j + t\mathbf{v}_j)\| \Big|_{t=0} = 0.
$$
Now, consider the midpoint of the edge $\mathbf{p}_3 \mathbf{p}_4$:  
$$
\mathbf{m} = \left( \frac{a}{2}, -\frac{b}{2} \right), \quad
\mathbf{m}' = \left( \frac{a}{2}, -\frac{b}{2} + \frac{t\epsilon}{2} \right).
$$
The distance from $\mathbf{p}_1$ to $\mathbf{m}$ is  
$$
\|\mathbf{p}_1 - \mathbf{m}\| = \sqrt{\left(-a - \frac{a}{2}\right)^2 + \left(\frac{b}{2}\right)^2}.
$$
After perturbation, expanding to first order in $t$,  
$$
\|\mathbf{p}_1 + t \mathbf{v}_1 - \mathbf{m}'\| = \|\mathbf{p}_1 - \mathbf{m}\| - \frac{t\epsilon}{2}.
$$
Thus, the vertex moves closer to the interior point while edge lengths remain unchanged, illustrating that preserving edge lengths does not necessarily prevent compression in other distances.
\end{Ex}

\subsection{Compression Metric}

We define a metric on the projective shape space $\mathcal{P}/\sim$ of a simplex as follows. Let $\alpha_{\min}, \dots, \alpha_{\max}$ be the eigenvalues of $\overline{\mathbf{A}}^T \overline{\mathbf{A}}$. Define $\Delta: \mathcal{P}/\sim \times \mathcal{P}/\sim \to \mathbb{R}$ by  
$$
\Delta([P], [Q]) = \ln \left(\frac{\alpha_{\max}}{\alpha_{\min}}\right)
$$
Here $[P]$ denotes an equivalence class of shapes under the relation of homothety, containing the shape $P$.

\begin{Prop}\label{metric}
The mapping $\Delta: \mathcal{P}/\sim \times \mathcal{P}/\sim \to \mathbb{R}$ defines a metric.
\end{Prop}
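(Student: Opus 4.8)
The plan is to translate everything into the language of singular values of the reduced matrix $\overline{\mathbf{A}}$ of the affine map $\mathbf{A}=\mathbf{Q}\mathbf{P}^{-1}$. Since $P$ and $Q$ are non-degenerate $d$-simplices, $\mathbf{P}$ and $\mathbf{Q}$ are invertible, and $\mathbf{A}$ carries the hyperplane $x_{d+1}=1$ to itself, so $\mathbf{A}$ is block upper triangular with invertible reduced block $\overline{\mathbf{A}}$ and bottom-right entry $1$. Writing $\sigma_{\max}\ge\cdots\ge\sigma_{\min}>0$ for the singular values of $\overline{\mathbf{A}}$, the eigenvalues of $\overline{\mathbf{A}}^T\overline{\mathbf{A}}$ are the $\sigma_i^2$, so
$$
\Delta([P],[Q]) \;=\; \ln\!\left(\frac{\sigma_{\max}^2}{\sigma_{\min}^2}\right) \;=\; 2\ln\frac{\sigma_{\max}}{\sigma_{\min}} \;=\; 2\ln\kappa(\overline{\mathbf{A}}),
$$
where $\kappa(\overline{\mathbf{A}})=\|\overline{\mathbf{A}}\|\,\|\overline{\mathbf{A}}^{-1}\|$ is the condition number. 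First I would check that $\Delta$ is well defined on $\mathcal{P}/\sim$: replacing $P$ or $Q$ by an ambient-isometric copy multiplies $\overline{\mathbf{A}}$ on the right or on the left by an orthogonal matrix, which leaves all $\sigma_i$ unchanged, and replacing $P$ or $Q$ by a homothetic copy multiplies $\overline{\mathbf{A}}$ by a positive scalar, which cancels in the ratio $\sigma_{\max}/\sigma_{\min}$. Hence $\Delta$ descends to $\mathcal{P}/\sim\times\mathcal{P}/\sim$.

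Non-negativity and the identity of indiscernibles come next. Since $\sigma_{\max}\ge\sigma_{\min}>0$ we get $\Delta([P],[Q])\ge 0$, with equality precisely when $\sigma_{\max}=\sigma_{\min}$, i.e.\ when $\overline{\mathbf{A}}^T\overline{\mathbf{A}}$ is a positive multiple of the identity, i.e.\ when $\overline{\mathbf{A}}$ is a positive scalar times an orthogonal matrix; equivalently the affine map $P\to Q$ is a similarity, which is exactly the condition $[P]=[Q]$. For symmetry I would use that $Q\to P$ is realized by $\mathbf{A}^{-1}=\mathbf{P}\mathbf{Q}^{-1}$, whose reduced block is $\overline{\mathbf{A}}^{-1}$ by the block-triangular structure; its singular values are the reciprocals $\sigma_i^{-1}$, so the ratio $\sigma_{\max}/\sigma_{\min}$ is unchanged and $\Delta([Q],[P])=\Delta([P],[Q])$.

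The substantive step is the triangle inequality. If $\overline{\mathbf{A}}$ realizes $P\to Q$ and $\overline{\mathbf{B}}$ realizes $Q\to R$, then $P\to R$ is realized by the product $\overline{\mathbf{B}}\,\overline{\mathbf{A}}$ (block upper triangular matrices multiply blockwise). I would then invoke submultiplicativity of the condition number: $\|\overline{\mathbf{B}}\,\overline{\mathbf{A}}\|\le\|\overline{\mathbf{B}}\|\,\|\overline{\mathbf{A}}\|$ by submultiplicativity of the operator norm, and applying this same inequality to the inverses gives $\sigma_{\min}(\overline{\mathbf{B}}\,\overline{\mathbf{A}})=\|(\overline{\mathbf{B}}\,\overline{\mathbf{A}})^{-1}\|^{-1}\ge\sigma_{\min}(\overline{\mathbf{B}})\,\sigma_{\min}(\overline{\mathbf{A}})$; hence $\kappa(\overline{\mathbf{B}}\,\overline{\mathbf{A}})\le\kappa(\overline{\mathbf{B}})\,\kappa(\overline{\mathbf{A}})$. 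Multiplying by $2$ and taking logarithms,
$$
\Delta([P],[R]) \;=\; 2\ln\kappa(\overline{\mathbf{B}}\,\overline{\mathbf{A}}) \;\le\; 2\ln\kappa(\overline{\mathbf{A}}) + 2\ln\kappa(\overline{\mathbf{B}}) \;=\; \Delta([P],[Q]) + \Delta([Q],[R]).
$$
I do not expect a genuine obstacle: the only points demanding care are the bookkeeping that the reduced matrix of a composition (respectively an inverse) of these affine maps is the composition (respectively inverse) of the reduced matrices, and the standard submultiplicativity of the spectral norm and of the condition number — the latter being the actual content of the triangle inequality.
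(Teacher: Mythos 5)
Your proof is correct and follows essentially the same route as the paper: both arguments reduce everything to the extremal eigenvalues of $\overline{\mathbf{A}}^T\overline{\mathbf{A}}$ (equivalently, the extremal singular values of $\overline{\mathbf{A}}$), and the triangle inequality in both cases rests on the same submultiplicativity fact $\sigma_{\max}(\overline{\mathbf{B}}\,\overline{\mathbf{A}})\le\sigma_{\max}(\overline{\mathbf{B}})\sigma_{\max}(\overline{\mathbf{A}})$ and $\sigma_{\min}(\overline{\mathbf{B}}\,\overline{\mathbf{A}})\ge\sigma_{\min}(\overline{\mathbf{B}})\sigma_{\min}(\overline{\mathbf{A}})$, which the paper derives via Rayleigh quotients and you derive via operator norms of the matrices and their inverses. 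Your version is, if anything, slightly more careful on the points the paper glosses over (well-definedness under isometry, and the characterization of when $\Delta([P],[Q])=0$ as $\overline{\mathbf{A}}$ being a positive scalar times an orthogonal matrix).
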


\begin{proof} \hfill
\begin{enumerate}

	\item If $P \sim P'$, then $P = \lambda P'$ for $\lambda \in \mathbb{R^+}$. Hence it follows immediately that if $P \sim P'$ and $Q \sim Q'$, then $\Delta(P,Q)=\Delta(P',Q')$. So $\Delta$ is well-defined on $\mathcal{P}/\sim$.
	
    \item $\Delta([P],[P])=0$, as the eigenvalues of the identity matrix are unity.
    
  \item $\Delta([P],[Q]) > 0$ whenever $[P] \neq [Q]$, since $\alpha_{\min} < \alpha_{\max}$, assuming that $P$ and $Q$ are not equivalent under a rigid body motion or a homothety.

    \item symmetry is satisfied since
    \begin{equation*}
        \Delta([P], [Q]) = \ln \left(\frac{\alpha_{\max}}{\alpha_{\min}}\right) = \ln \left(\frac{\alpha_{\min}^{-1}}{\alpha_{\max}^{-1}}\right) = \Delta([Q], [P]),
    \end{equation*}
    as the minimum and maximum eigenvalues of $\overline{\mathbf{A}^{-1}}^T \overline{\mathbf{A}^{-1}}$ are $\alpha_{\max}^{-1}$ and $\alpha_{\min}^{-1}$.
    
     \item The triangle inequality holds:
\begin{equation*}
    \Delta([P], [R]) \leq \Delta([P], [Q]) + \Delta([Q], [R]).
\end{equation*}

    Let $\mathbf{A}: P \mapsto Q$, $\mathbf{B}: Q \mapsto R$, and $\mathbf{C}: P \mapsto R$. The symmetric matrices $\overline{\mathbf{A}}^T \overline{\mathbf{A}}$, $\overline{\mathbf{B}}^T \overline{\mathbf{B}}$, and $\overline{\mathbf{C}}^T \overline{\mathbf{C}}$ have minimum amd maximum eigenvalues $\alpha_{\min}, \alpha_{\max}$, $\beta_{\min}, \beta_{\max}$, and $\gamma_{\min}, \gamma_{\max}$, respectively.  

    The eigenvector $\mathbf{c}_{\min}$ associated with $\gamma_{\min}$ satisfies  
    \begin{align*}
        \gamma_{\min} &= \mathbf{c}_{\min}^T \overline{\mathbf{C}}^T \overline{\mathbf{C}} \mathbf{c}_{\min}\\
        &= \mathbf{c}_{\min}^T \overline{\mathbf{A}}^T \overline{\mathbf{B}}^T \overline{\mathbf{B}} \overline{\mathbf{A}} \mathbf{c}_{\min}\\
        &\geq \mathbf{c}_{\min}^T \overline{\mathbf{A}}^T \beta_{\min} I_n \overline{\mathbf{A}} \mathbf{c}_{\min}\\
        &\geq \alpha_{\min} \beta_{\min}.
    \end{align*}
    Similarly, $\gamma_{\max} \leq \alpha_{\max} \beta_{\max}$. It follows that  
    \begin{align*}
        \Delta([P], [R]) &= \ln \left(\frac{\gamma_{\max}}{\gamma_{\min}}\right)\\
        &\leq \ln \left(\frac{\alpha_{\max} \beta_{\max}}{\alpha_{\min} \beta_{\min}}\right)\\
        &= \ln \left(\frac{\alpha_{\max}}{\alpha_{\min}}\right) + \ln \left(\frac{\beta_{\max}}{\beta_{\min}}\right)\\
        &= \Delta([P], [Q]) + \Delta([Q], [R]),
    \end{align*}
    proving the triangle inequality.
\end{enumerate}
\end{proof}

\begin{Thm}\label{complete}
The projective space of shapes $\mathcal{P}/\sim$ of a simplex, equipped with the compression metric
$$
\Delta([P], [Q]) = \ln \left(\frac{\alpha_{\max}}{\alpha_{\min}}\right)
$$
where $\alpha_{\max}$ and $\alpha_{\min}$ are the maximum and minimum eigenvalues of the symmetric matrix $\overline{\mathbf{A}}^T \overline{\mathbf{A}}$, is complete.
\end{Thm}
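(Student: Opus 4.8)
The plan is to identify the projective shape space $\mathcal{P}/\sim$ of a $d$-simplex with the space of positive definite symmetric $d\times d$ matrices modulo positive scaling, to rewrite $\Delta$ explicitly in terms of generalized eigenvalues, and then to deduce completeness by an elementary Loewner-order argument.

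First I attach to each shape $P$ with vertices $p_1,\dots,p_{d+1}$ the Gram matrix $G_P=E_P^{T}E_P$ of its edge vectors $E_P=(p_2-p_1\mid\cdots\mid p_{d+1}-p_1)$. Non-degeneracy of $P$ is equivalent to invertibility of $E_P$, two shapes are isometric exactly when their Gram matrices coincide, a homothety of ratio $\lambda$ multiplies $G_P$ by $\lambda^2$, and conversely every positive definite symmetric matrix is the Gram matrix of some non-degenerate simplex by Cholesky factorization; hence $[P]\mapsto[G_P]$ is a bijection of $\mathcal{P}/\sim$ onto $\mathcal{S}/\mathbb{R}^{+}$, where $\mathcal{S}$ denotes the positive definite symmetric matrices. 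The linear part of the affine map $f\colon P\to Q$ is $\overline{\mathbf{A}}=E_QE_P^{-1}$, so $\overline{\mathbf{A}}^{T}\overline{\mathbf{A}}=E_P^{-T}G_QE_P^{-1}$ is similar (conjugate by $E_P^{-1}$) to $(E_P^{T}E_P)^{-1}G_Q=G_P^{-1}G_Q$. Therefore the eigenvalues $\alpha_{\min},\dots,\alpha_{\max}$ of $\overline{\mathbf{A}}^{T}\overline{\mathbf{A}}$ are exactly the positive real generalized eigenvalues $\mu_1\ge\cdots\ge\mu_d>0$ of the pair $(G_Q,G_P)$, i.e.\ the eigenvalues of $G_P^{-1/2}G_QG_P^{-1/2}$, and
$$\Delta([P],[Q])=\ln\!\left(\frac{\mu_1}{\mu_d}\right),$$
a quantity manifestly unchanged under rescaling $G_P$ or $G_Q$, consistent with its being defined on the projective shape space.

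Next I take a $\Delta$-Cauchy sequence $([P_n])$ and, using the scaling freedom, choose representatives $G_n\in\mathcal{S}$ with $\det G_n=1$. Then $\prod_i\mu_i(G_n^{-1}G_m)=\det(G_n^{-1}G_m)=1$, so $\mu_1\ge1\ge\mu_d$, and the Cauchy condition $\ln(\mu_1/\mu_d)<\epsilon$ forces every eigenvalue of $G_n^{-1}G_m$ into $(e^{-\epsilon},e^{\epsilon})$, which in the Loewner order reads
$$e^{-\epsilon}G_n\;\le\;G_m\;\le\;e^{\epsilon}G_n\qquad(m,n\ge N(\epsilon)).$$
Taking $\epsilon=1$ gives a uniform bound $\|G_n\|\le C$, whence the two-sided estimate yields $\|G_m-G_n\|\le(e^{\epsilon}-1)C$ for $m,n\ge N(\epsilon)$; so $(G_n)$ is Cauchy in the finite-dimensional normed space of symmetric matrices and converges to a symmetric $G_\infty$ with $\det G_\infty=1$. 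Letting $m\to\infty$ in $e^{-\epsilon}G_{N(\epsilon)}\le G_m$ shows $G_\infty\ge e^{-\epsilon}G_{N(\epsilon)}>0$, so $G_\infty$ is positive definite and represents a genuine shape $[P_\infty]\in\mathcal{P}/\sim$; and since $G_n^{-1}G_\infty\to I$, continuity of eigenvalues gives $\mu_i(G_n^{-1}G_\infty)\to1$ and hence $\Delta([P_n],[P_\infty])\to0$. This establishes completeness.

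I expect the substantive content to be the identification $\Delta=\ln(\mu_1/\mu_d)$ together with the determinant-one normalization, which is exactly what converts the Cauchy condition into the Loewner sandwich; the one place where something could conceivably fail — a Cauchy sequence degenerating towards a lower-dimensional ``shape'' — is precisely what the lower bound $G_\infty\ge e^{-\epsilon}G_{N(\epsilon)}>0$ excludes, so that is the inequality I would verify most carefully. (Alternatively one could compare $\Delta$ with the standard $GL(d)$-invariant Riemannian metric on $\mathcal{S}/\mathbb{R}^{+}\cong SL(d,\mathbb{R})/SO(d)$, which restricts on each flat to a norm on the traceless diagonal matrices and is therefore Lipschitz-equivalent to $\Delta$ there, transferring completeness of the symmetric space; but the direct argument above avoids invoking that machinery.)
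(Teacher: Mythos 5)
Your proof is correct, and it takes a genuinely different (and more careful) route than the paper's. The paper works directly with the pairwise transformation matrices $\mathbf{A}_{nm}$: from the Cauchy condition it deduces that the eigenvalue ratio of $\overline{\mathbf{A}}_{nm}^T\overline{\mathbf{A}}_{nm}$ tends to $1$, asserts that all eigenvalues converge uniformly to a common $\lambda\in(0,1]$ so that $\overline{\mathbf{A}}_{nm}^T\overline{\mathbf{A}}_{nm}\to\lambda\mathbf{I}$, and then concludes that a sequence of reduced matrices converges to a limit shape; it leaves implicit both the choice of representatives within each homothety class and the reason the limit is a non-degenerate simplex. Your reformulation --- identifying $\mathcal{P}/\sim$ with positive definite Gram matrices modulo scaling, observing that $\overline{\mathbf{A}}^T\overline{\mathbf{A}}$ is similar to $G_P^{-1}G_Q$, normalizing $\det G_n=1$, and converting the Cauchy condition into the Loewner sandwich $e^{-\epsilon}G_n\le G_m\le e^{\epsilon}G_n$ --- supplies exactly the two missing pieces: the determinant normalization pins down representatives so that ``Cauchy in $\Delta$'' becomes ``Cauchy in a finite-dimensional normed space,'' and the lower bound $G_\infty\ge e^{-\epsilon}G_{N(\epsilon)}>0$ is what excludes degeneration of the limit to a lower-dimensional configuration, a point the paper does not address. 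Both arguments revolve around controlling the spectrum of the same symmetric matrix, but yours closes the argument rigorously where the paper's version only sketches it; the final step $G_n^{-1}G_\infty\to I$ giving $\Delta([P_n],[P_\infty])\to 0$ is also stated and justified, which the paper omits.
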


\begin{proof}
Let $\{[P_n]\} \subset \mathcal{P}/\sim$ be a Cauchy sequence. Then, given $\epsilon > 0$, there exists $N$ such that for all $m, n \geq N$
$$
\Delta([P_n], [P_m]) = \ln \left(\frac{\alpha_{\max}}{\alpha_{\min}}\right) < \epsilon
$$
Since the logarithm is continuous and strictly increasing, this implies
$$
\lim_{m,n \to \infty} \frac{\alpha_{\max}}{\alpha_{\min}} = 1
$$

Let $\mathbf{A}_{nm}$ be the transformation matrix between shapes $[P_n]$ and $[P_m]$, defined as
$$
\mathbf{A}_{nm} = \mathbf{Q}_m \mathbf{P}_n^{-1}
$$
where $\mathbf{Q}_m$ and $\mathbf{P}_n$ are representations of shapes $P_m$ and $P_n$. The eigenvalues of $\overline{\mathbf{A}}_{nm}^T \overline{\mathbf{A}}_{nm}$ converge uniformly to $\lambda$,  where $0 < \lambda \le 1$. By the spectral theorem for symmetric matrices,
$$
\overline{\mathbf{A}}_{nm}^T \overline{\mathbf{A}}_{nm} = \mathbf{Q}_{nm} \mathbf{D}_{nm} \mathbf{Q}_{nm}^T
$$
where $\mathbf{D}_{nm}$ is a diagonal matrix with entries $\alpha_1, \alpha_2, \dots, \alpha_n$. 

Since $\lim_{m,n \to \infty} \alpha_i = \lambda$ for all $i$, it follows that
$$
\lim_{m,n \to \infty} \mathbf{D}_{nm} = \lambda \mathbf{I}
$$
Therefore
$$
\lim_{m,n \to \infty} \overline{\mathbf{A}}_{nm}^T \overline{\mathbf{A}}_{nm} = \lim_{m,n \to \infty} \mathbf{Q}_{nm} \mathbf{D}_{nm} \mathbf{Q}_{nm}^T = \lambda\mathbf{Q}_* \mathbf{I} \mathbf{Q}_*^T = \lambda \mathbf{I},
$$
where $\mathbf{Q}_*$ is the limit of $\mathbf{Q}_{nm}$.

The convergence of $\overline{\mathbf{A}}_{nm}^T \overline{\mathbf{A}}_{nm}$ to $\lambda \mathbf{I}$ ensures that the sequence of reduced transformation matrices $\overline{\mathbf{A}}_n$ converges to a well-defined limit $\overline{\mathbf{A}}_*$. Consequently, the projective shape sequence $\{[P_n]\}$ converges to a limiting shape $[P_*]$ in $\mathcal{P}/\sim$, concluding the proof.
\end{proof}

\section{Compression metric for polytopes}

We can extend the results of the previous section to general polytopes. 

Consider the projective shape space $\mathcal{P}/\sim$ of a polytope. Recall that a shape of the polytope $P$ is obtained from the barycentric subdivision $B(P)$ by a realization as a convex shape in $\mathbb{R}^d$ where each simplex in $B(P)$ has an induced shape given by the vertices being  located at the barycentres of faces of $P$.

\begin{Def}
Let $\sigma_1, \dots \sigma_t$ be the barycentric simplex shapes for a realization $P$ and $\tau_1, \dots \tau_t$ be the corresponding shapes for a second realization $Q$ of the same polytope. We define the compression metric $\Delta$ on $\mathcal{P}/\sim$ as $$\Delta([P],[Q])=max_{1 \le i \le t}\Delta([\sigma_i],[\tau_i])$$

\end{Def}

\begin{Prop} \label{metricpoly}
$\Delta$ is a metric on the projective shape space of a polytope $\mathcal{P}/\sim$.

\end{Prop}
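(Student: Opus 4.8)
The plan is to verify the four metric axioms for $\Delta$ on $\mathcal{P}/\sim$ by reducing each one, termwise, to the corresponding property of the simplex compression metric of Proposition \ref{metric}, together with the elementary fact that a maximum of finitely many metrics is again a metric. First I would record that the correspondence $\sigma_i \leftrightarrow \tau_i$ is canonical: each $d$-simplex of $B(P)$ is indexed by a maximal flag $F_0 \subset F_1 \subset \dots \subset F_d$ of faces of the combinatorial polytope $\Pi$, and the same flag indexes the matching simplex of $B(Q)$; since shapes are required to be non-degenerate, each $\sigma_i, \tau_i$ is a non-degenerate $d$-simplex, so the vertex-matching affine map $\sigma_i \to \tau_i$ is invertible and $\Delta([\sigma_i],[\tau_i])$ is finite and well defined. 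Well-definedness on $\mathcal{P}/\sim$ is immediate: replacing $P$ by $\lambda P$ replaces each $\sigma_i$ by $\lambda \sigma_i$, which changes neither $[\sigma_i]$ nor any term in the maximum.

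Non-negativity, vanishing on the diagonal, and symmetry then follow termwise from Proposition \ref{metric}: each $\Delta([\sigma_i],[\tau_i]) \ge 0$, with all terms vanishing when $P \sim Q$, and $\Delta([\sigma_i],[\tau_i]) = \Delta([\tau_i],[\sigma_i])$, so the same holds after taking the maximum over $i$. For the triangle inequality, let $R$ be a third shape of $\Pi$ with barycentric simplices $\rho_1, \dots, \rho_t$. Applying the simplex triangle inequality of Proposition \ref{metric} index by index and then using $\max_i(a_i + b_i) \le \max_i a_i + \max_i b_i$ gives
$$\Delta([P],[R]) = \max_i \Delta([\sigma_i],[\rho_i]) \le \max_i\big(\Delta([\sigma_i],[\tau_i]) + \Delta([\tau_i],[\rho_i])\big) \le \Delta([P],[Q]) + \Delta([Q],[R]).$$

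The one point requiring a genuine argument — and the step I expect to be the main obstacle — is the identity of indiscernibles: $\Delta([P],[Q]) = 0$ must force $[P]=[Q]$. Vanishing of the maximum means $\Delta([\sigma_i],[\tau_i]) = 0$ for every $i$, so by Proposition \ref{metric} the reduced Gram matrix $\overline{\mathbf{A}}_i^T\overline{\mathbf{A}}_i$ has all eigenvalues equal to a common value $r_i^2 > 0$; equivalently the vertex-matching map $f|_{\sigma_i}\colon \sigma_i \to \tau_i$ is a similarity of ratio $r_i$. If $\sigma_i$ and $\sigma_j$ share a codimension-one face $F$, then $F$ is sent by the globally defined piecewise linear map $f\colon P \to Q$ to a single simplex of $B(Q)$, and $f|_F$ scales distances by $r_i$ when computed inside $\sigma_i$ and by $r_j$ when computed inside $\sigma_j$; hence $r_i = r_j$. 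Since $B(P)$ triangulates the connected set $P$, its face-pairing graph is connected, so all the $r_i$ equal a common $r$, and $f\colon P \to Q$ multiplies every distance by $r$. Thus $Q$ is the image of $P$ under a homothety of ratio $r$ composed with an ambient isometry, i.e. $[P] = [Q]$ in $\mathcal{P}/\sim$. This completes the verification that $\Delta$ is a metric.
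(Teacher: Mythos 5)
Your proof is correct and follows essentially the same route as the paper: reduce each metric axiom termwise to Proposition \ref{metric}, with non-degeneracy as the only step needing real work. The only difference is that you spell out the ``easy to see'' gluing step (equality of the similarity ratios $r_i$ across shared codimension-one faces plus connectivity of the face-pairing graph), which the paper leaves implicit.
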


\begin{proof}
This follows easily from Proposition \ref{metric}. For it is straightforward to check the properties of a metric follow since $\Delta$ is the maximum of the distances between the corresponding shapes of the simplices of the barycentric subdivisions of two realizations of the polytope. 

The only property which does not follow immediately is non-degeneracy, i.e. that if $\Delta([P],[Q])=0$ then $[P]=[Q]$. To check this, note that 
$\Delta([P],[Q])=0$ implies $\Delta([\sigma_i],[\tau_i])=0$ for $1 \le i \le t$. But then there are homotheties so that $\sigma_i=\lambda_i \tau_i$, for $1 \le i \le t$. Since the shapes of $\sigma_i$ and $\tau_i$ must glue together to form convex realizations of the same polytope, it is easy to see that this implies all the homothety factors $\lambda_i$ must be the same. Hence there is a homothety between the realizations $P,Q$ and $[P]=[Q]$ as required 

\end{proof}

Recall that given a polytope $P$ with $N$ vertices, its shape space $\mathcal P$ embeds in the configuration space $\mathcal N$ of $N$ points in $\mathbb{R}^d$.

\begin{Def}\label{weak}
The space of weakly convex realizations of $P$ consists of convex subsets of $\mathbb{R}^d$ satisfying one or the other of the following conditions.

\begin{itemize}
 \item The extreme points of the convex subsets can be identified with all the vertices of $P$ and the face structure is that of $P$. These are the usual realizations of $P$.
 
 \item The extreme points of the convex subsets correspond to a proper subset of the vertices of $P$. Additionally, the other vertices of $P$ can be identified with non-extreme points in the boundary of the realization, so that each face of $P$ is realized as the convex hull of its vertices. 
\end{itemize}
\end{Def}

The simplest example of the second situation in Definition \ref{weak} is where two facets become part of the same codimension one affine subspace. In this case, the dihedral angle between these facets is $\pi$. 

\begin{Thm}
The projective shape space of a polytope has completion consisting of the projective space of weakly convex realizations of $P$. 

\end{Thm}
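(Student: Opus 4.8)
The plan is to identify the metric completion of $(\mathcal{P}/\sim,\Delta)$ with the space $\mathcal{W}$ of weakly convex realizations of $\Pi$ modulo homothety, equipped with the same formula $\Delta([P],[Q])=\max_{1\le i\le t}\Delta([\sigma_i],[\tau_i])$ over corresponding barycentric simplices. Since the metric completion of a space is unique, it suffices to establish three things: that $\Delta$ is a metric on $\mathcal{W}$; that $\mathcal{P}/\sim$ sits in $\mathcal{W}$ as an isometric, dense subset; and that $\mathcal{W}$ is complete. For the first, one essentially repeats the proofs of Propositions \ref{metric} and \ref{metricpoly}. The only new point is that a weakly convex realization, being by definition a full-dimensional convex subset of $\mathbb{R}^d$ in which every face of $\Pi$ is the convex hull of the corresponding vertices, still has a barycentric subdivision all of whose $d$-simplices are non-degenerate; hence the affine map between corresponding barycentric simplices of two weakly convex realizations is invertible, $\Delta$ is finite, and the argument of Proposition \ref{metricpoly} — that the homothety factors on the individual simplices must all coincide because the simplices glue into a connected full-dimensional set — still gives non-degeneracy of $\Delta$ on $\mathcal{W}$.

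For density, given a weakly convex realization $P^\ast$ I would exhibit strictly convex realizations $P_n$ with $\Delta([P_n],[P^\ast])\to 0$ by perturbing $P^\ast$ viewed as an intersection of closed half-spaces: for each pair of adjacent facets whose dihedral angle equals $\pi$, tilt one of the two supporting hyperplanes by an amount $O(1/n)$ so that the angle becomes $\pi-O(1/n)<\pi$, and likewise nudge any hyperplane responsible for a vertex being non-extreme so that the vertex becomes extreme. Since $\Pi$ is simple, a sufficiently small such perturbation preserves the combinatorial type, so $P_n$ is an honest convex realization, and each of its barycentric simplex shapes differs from the corresponding shape of $P^\ast$ by $O(1/n)$, whence $\Delta([P_n],[P^\ast])\to 0$. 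This also shows the completion is strictly larger than $\mathcal{P}/\sim$: a sequence of strictly convex shapes whose ridges flatten out converges to a weakly convex shape with a dihedral angle equal to $\pi$.

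For completeness of $\mathcal{W}$, let $([P_n])$ be Cauchy. For each barycentric index $i$ we have $\Delta([\sigma_i^{(n)}],[\sigma_i^{(m)}])\le\Delta([P_n],[P_m])$, so the simplex shapes $([\sigma_i^{(n)}])_n$ form a Cauchy sequence in the projective shape space of a $d$-simplex, which is complete by Theorem \ref{complete}; let $[\sigma_i^\ast]$ be its limit. Fixing a representative of each $[P_n]$ by scaling and placing one chosen barycentric simplex in a standard position, the remaining barycentres of faces of $P_n$ are determined by propagating through the barycentric subdivision, and since all the transition maps converge, the positions of every barycentre — and hence of every vertex $v_j^{(n)}$ — converge to limits $v_j^\ast$; the union of the limiting simplices $\sigma_i^\ast$ is the candidate $P^\ast$. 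One then checks that $P^\ast$ is a weakly convex realization of $\Pi$: full-dimensionality is automatic, since a drop in dimension would degenerate some barycentric simplex and drive $\Delta$ to infinity, contradicting convergence in the metric of Theorem \ref{complete}; each dihedral angle between adjacent facets is a continuous function of the barycentric simplex shapes, hence a limit of angles $\le\pi$, hence $\le\pi$ in the limit; and $P^\ast$ is convex with every face the convex hull of its vertices, being a limit of the (weakly) convex realizations $P_n$. Thus $[P^\ast]\in\mathcal{W}$ and $[P_n]\to[P^\ast]$, so $\mathcal{W}$ is complete and is therefore the completion of $\mathcal{P}/\sim$.

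I expect the last step to be the main obstacle, and within it the delicate point is precisely the claim that the limiting simplices $\sigma_i^\ast$ assemble into an \emph{embedded} weakly convex subset of $\mathbb{R}^d$ rather than a merely immersed or branched simplicial complex — that is, proving global injectivity of the limiting developing map and ruling out any degeneration beyond the two types sanctioned by Definition \ref{weak}, in particular ruling out a facet folding across an adjacent one. Convexity of the approximating $P_n$ is what forces this, but making the passage to the limit rigorous, rather than invoking it informally, is the part requiring genuine care; by contrast the density step is comparatively routine, once one observes that on families of shapes bounded away from degeneration, $\Delta$-convergence is controlled by convergence of the vertex coordinates.
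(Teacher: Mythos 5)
Your proposal is correct and, for the core completeness step, follows the same route as the paper: decompose each realization into its barycentric simplices, observe that a Cauchy sequence of polytope shapes gives Cauchy sequences of simplex shapes, invoke Theorem \ref{complete} to get limiting simplex shapes, and glue them back together, with the new phenomenon being that dihedral angles can reach $\pi$ in the limit. Where you genuinely go beyond the paper is in the logical scaffolding: the paper only argues that the space of weakly convex realizations is (sequentially) complete, whereas to conclude it is \emph{the} completion one also needs that $\Delta$ extends to a metric on the larger space and that $\mathcal{P}/\sim$ is dense in it. You supply both — the metric verification by rerunning Propositions \ref{metric} and \ref{metricpoly} using non-degeneracy of the barycentric simplices of a weakly convex shape, and density by tilting supporting hyperplanes to resolve each dihedral angle of $\pi$ and each non-extreme vertex, using simplicity of $\Pi$ to preserve combinatorial type. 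These are exactly the steps the paper leaves implicit, and your argument for them is sound. The delicate point you flag at the end — that the limiting simplices assemble into an embedded weakly convex set rather than a branched or folded complex, i.e.\ that no degenerations occur beyond the two allowed in Definition \ref{weak} — is a real gap, but it is equally present (and less explicitly acknowledged) in the paper's own proof, which simply asserts that ``the limit shapes must also fit together to form such a convex shape.'' A clean way to close it is to note that each $P_n$ is the intersection of its facet half-spaces, that the facet hyperplanes converge (their normals and offsets being continuous functions of the vertex positions, which you have shown converge), and that an intersection of closed half-spaces is automatically convex and embedded; one then only has to check that the limiting vertices lie on the boundary of this intersection with the correct face incidences, which follows from convergence. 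Your version is therefore a more complete proof than the one in the paper, at the cost of length.
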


    \begin{proof}
    
    It suffices to show that any Cauchy sequence in the projective space of weakly convex realizations has a convergent subsequence. The argument follows the same approach as in Theorem \ref{complete}. 
    
    Note that for a Cauchy sequence of shapes, it is easy to see that the shapes of individual simplices of the barycentric subdivisions form Cauchy sequences. Therefore, as in Theorem \ref{complete}, these sequences have limiting shapes of the simplices in the projective shape space of a simplex. But now if we scale the simplices so that they fit together to form a convex shape for the polytope, the limit shapes must also fit together to form such a convex shape. See the proof of Proposition \ref{metricpoly}. Hence the Cauchy sequence does converge as required. 

    The key difference to Theorem \ref{complete} is that the dihedral angle between adjacent simplices in the limiting barycentric subdivision can be $\pi$. So the limiting shape can be a weakly convex realization as well as a convex realization of the polytope.

    \end{proof}

\bibliographystyle{amsplain}

\end{document}